\documentclass[12pt]{amsart}
\usepackage{amsmath,amssymb}
\usepackage{amsfonts}
\usepackage[dvips]{graphicx}
\usepackage[latin1]{inputenc}
\usepackage{multicol}
\usepackage{epsfig}
\usepackage[all]{xy}

\setlength{\textheight}{53pc} \setlength{\textwidth} {36pc}
\setlength{\topmargin}{-4mm}

\newtheorem{theorem}{Theorem}[section] 
\newtheorem{lemma}[theorem]{Lemma}     

\theoremstyle{definition}

\newcommand{\im}{\operatorname{Im}}

\newcommand\alge{\mathfrak{A}}

\newcommand\oink{\mathcal O}

\newcommand\Da{\mathfrak{D}}

\newcommand\Ea{\mathfrak{E}}

\newcommand\qdef{\eth}

\newcommand\bmattrix[4]{\left(\begin{array}{cc}#1&#2\\#3&#4\end{array}\right)}
\newcommand\vvect[2]{\left(\begin{array}{c}#1\\#2\end{array}\right)}
\newcommand\sbmattrix[4]{\textnormal{\scriptsize$\left(\begin{array}{cc}#1&#2\\#3&#4\end{array}\right)$\normalsize}}

\newcommand\matrici{\mathbb{M}}

\begin{document}

\title{On the missing branches of the Bruhat-Tits tree.}

\author{Luis Arenas-Carmona\\
Claudio Bravo}

\maketitle

\begin{abstract}
Let $k$ be a local field and let $\alge$ be the two-by-two matrix algebra over $k$.
In our previous work we developped a theory that allows the computation of the set of maximal orders in $\alge$
containing a given suborder. This set is given as a subtree of the Bruhat-Tits tree that is called the branch
 of the order. Branches have been used to study the global selectivity problem and also to compute local
embedding numbers. They can usually be described in terms of two invariants. To compute these
invariants explicitely, the strategy in our past work has been
visualizing branches through the explicit representation
of the Bruhat-Tits tree in terms of balls in $k$. This is easier for orders spanning a split commutative sub-algebra,
i.e., an algebra isomorphic to $k\times k$. 
In the present work, we develop a theory of branches over field extension that can be used
to extend our previous computations to orders spanning a field. We use the same idea to compute
branches for orders generated by arbitrary pairs of non-nilpotent pure quaternions. In fact, the hypotheses 
on the generators are not essential.
\end{abstract}

\section{Introduction}

Let $\Omega$ be an order in the local matrix algebra $\alge=\mathbb{M}_2(k)$, where $k$ is a local field. The
set $S(\Omega)$ of maximal orders in  $\alge$ containing $\Omega$ plays a significant role in the study of 
several interesting
arithmetical phenomena, like the selectivity problem, determining whether a global order embeds into all or just into
some of the maximal orders in a quaternion algebra over a global field $F$ \cite{Eichler2}, 
\cite{osbotbtt}, or describing the normalizers in 
$\mathrm{PSL}_2(F)$ of Eichler orders and congruence subgroups \cite{scffgeo}. The study of this set plays also
a significant role in determining quotient graphs for some arithmetically important subgroups of the 
general linear group \cite{cqqgvro}.

Usually, we describe this set of orders
as the vertex set of a subgraph $\mathfrak{s}(\Omega)$ of the Bruhat-Tits tree
$\mathfrak{t}(k)$, a tree
whose vertices are the maximal orders in $\alge$, while two of them $\Da$ and $\Da'$ are neighbors if
in some basis they have the form 
$$\Da=\bmattrix{\oink}{\oink}{\oink}{\oink}\textnormal{ and }
\Da'=\bmattrix{\oink}{\pi^{-1}\oink}{\pi\oink}{\oink},$$
where $\oink$ is the ring of integers, and $\pi$ is a uniformizing parameter of $k$. The graph 
 $\mathfrak{s}(\Omega)$, which we call the branch of $\Omega$, is usually a tubular neighborhood of a path
$\mathfrak{p}$, i.e., a thick path with stem $\mathfrak{p}$ as defined in \S2,
 except for a couple of very simple orders
(c.f. \cite[Prop 5.3]{Eichler2} and \cite[Prop 5.4]{Eichler2}):
\begin{enumerate}
\item $\Omega=\oink$, identified with the ring of scalar matrices with integral entries.
\item $\Omega=\oink[u]$ where $u\in\alge\backslash\{0\}$ is nilpotent. This is called an idempotent order.
\end{enumerate}
In the latter case, the branch of $\Omega$ is a graph called an infinite leaf \cite{abelianos},
 and can be seen as
the limit of a sequence of thick paths, with a common vertex of valency one,
 whose stems get infinitely far away. This type of set is called a horoball in some previous
 literature on diophantine aproximation \cite{Paulin}.

The graph $\mathfrak{t}(k)$ can alternatively be defined as a graph whose vertices are the balls in $k$,
while two balls are neighbors if one is a maximal proper sub-ball of the other. This observation becomes
a powerful tool to compute branches, while embeddings can be easily interpreted graphically in this
context, provided that the order $\Omega$ is the intersection of a family of maximal orders.
This property allowed us to compute local embedding numbers for orders spanning algebras
whose maximal semisimple quotient is the split commutative algebra $k\times k$ \cite{omeat}. 
Unfortunately, this is not the case for many interesting orders, like those contained in maximal subfields.

The purpose of the present work is to develope a technique that can be applied to the latter orders. 
With this in mind, we construct an embedding of the graph $\mathfrak{t}(k)$, or more precisely an appropriate
subdivision of it, into the graph $\mathfrak{t}(L)$ for any finite field extension $L/k$. In order to show
the scope of this new tool, we extend two of our previous results.  The first one extend the explicit
formulas obtained in \cite{osbotbtt} to compute the invariant describing the branch $\mathfrak{s}(\Omega)$,
when $\Omega=\oink[i,j]$ is the order generated by two orthogonal pure quaternions,
i.e., two matrices satisfying the relations
\begin{equation}\label{133}
i^2=a,\qquad j^2=b,\qquad ij+ji=0,
\end{equation}
which are the standard generators
 of a quaternion algebra, and therefore play a central role in the theory.
However, orders generated by more general pairs of pure quaternions do apear naturally in practical problems,
making desirable to extend this computation in a more general setting (c.f. \cite{Brzezinski}).
 More precisely, in this work
we no longer require the orthogonality condition.

 Our second result extends the embedding number
computations in \cite{omeat} to the case of orders contained in fields, the only orders of non-maximal rank that
failed to be considered in our previous work.
$$ $$

\paragraph{\textbf{Conventions on graphs and walks}}
In all that follows, a graph $\mathfrak{g}$ is a set of vertices $V_\mathfrak{g}$ toghether with a
symmetric relation called the neighborhood relation in $\mathfrak{g}$. A subgraph of   $\mathfrak{g}$
is any graph $\mathfrak{h}$, satisfying $V_\mathfrak{h}\subseteq V_\mathfrak{g}$ and
 whose  neighborhood relation implies the induced relation. If the
neighborhood relation in $\mathfrak{h}$ is the induced relation, we call it a full subgraph.
We are not concerned with non-full subgraphs in this work.
The intersection of a family of full subgraphs is also a full subgraph with the natural conventions.
The valency of a vertex, in a given graph $\mathfrak{g}$,  is the number of its neighbors.
Vertices of valency $1$ are called optimal, since, when $\mathfrak{g}=\mathfrak{s}(\Omega)$
as before, they correspond to maximal orders in which $\Omega$ is optimal \cite{omeat}.
A finite walk  in $\mathfrak{g}$ is a sequence of vertices $v_0v_1\dots v_r$ satisfying the following conditions:
\begin{enumerate}
\item Each par of consecutive vertices are neighbors.
\item There is no backtracking, i.e., $v_i\neq v_{i+2}$ for every $i=0,\dots,r-2$.
\end{enumerate}
We usually emphasize the initial and last vertex in the walk by saying a walk from $v_0$ to $v_r$.
A graph $\mathfrak{g}$ is connected if there is a walk from every certex $v_0\in V_\mathfrak{g}$
to every vertex $v_r\in V_\mathfrak{g}$.
A cycle is a walk $v_0v_1\dots v_r$ satisfying $v_r=v_0$. A tree is a connected graph with no cycles.
Equivalently, a graph is a tree if there is a unique walk from $v_0$ to $v_r$ for any pair 
$(v_0,v_r)\in V_\mathfrak{g}\times V_\mathfrak{g}$. A walk in a tree has no repeated vertices.
All graph considered here are trees.  We call $r$ the length of the walk $v_0\dots v_r$,
and we admit the walk $v_0$ of length $0$. The tree distance in $\mathfrak{g}$ is the
metric $\delta$  in  $V_\mathfrak{g}$ defined by $\delta(v,w)=r$ if the walk from $v$ to $w$ has length $r$. 
We also consider two types of infinite walks:
\begin{enumerate}
\item An infinite walk is a sequence of the form $v_0v_1\dots$ with one vertex for each natural number.
\item A double infinite walk is a sequence of the form $\dots v_{-1}v_0v_1\dots$ 
with one vertex for each integer.
\end{enumerate}
We identify the double infinite walks $\dots v_{-1}v_0v_1\dots$ and $\dots v'_{-1}v'_0v'_1\dots$ provided
$v'_t=v_{t+m}$ for a fixed integer $m$ and every integer $t$. We also define an equivalence relation between 
infinite walks, where $v_0v_1\dots$ and $v'_0v'_1\dots$ are related whenever
$v'_t=v_{t+m}$ for a fixed integer $m$ and every big enough integer $t$. Equivalence classes
in the latter sense are called ends. We usually represent an end graphically by a dot beyond the walk.
Furthermore, for any subgraph $\mathfrak{h}$ of $\mathfrak{g}$, there is a natural embedding
from the set of ends of $\mathfrak{h}$ to the set of ends of $\mathfrak{g}$. We identify the
ends of $\mathfrak{h}$ with the corresponding ends of $\mathfrak{g}$, and usually write expresions
like the end $a$ is in $\mathfrak{h}$,
 or belongs to $\mathfrak{h}$, in this sense, and even write $a\in\mathfrak{h}$.

\section{Main results}
 
In all that follows, $k$, $\oink$, $\alge$ and $\Omega$ are as before. Let $\pi$ be a uniformizing parameter
in $k$, and let $\nu:k^*\rightarrow\mathbb{Z}$ be the usual valuation, normalized in a way that $\nu(\pi)=1$.
We let $N$ be the number of quadratic classes of ramified units in $\oink^*$, i.e., units  whose square
 roots generate ramified extensions, so that the set of square classes in  $k^*$ is 
$$k^*/k^{*2}=\{\bar{1},\bar{\Delta},\bar{u}_1,\dots,\bar{u}_N,\bar{\pi}_1,\dots,\bar{\pi}_{N+2}\},$$
where $\Delta$ is a unit of minimal quadratic defect \cite{Om}, $u_1,\dots,u_N$ are ramified units,
and $\pi_1,\dots,\pi_{N+2}$ are uniformizing parameters. Our results are usually stated in terms of the quadratic
defect $\qdef$. For any element $a\in k^*$, the quadratic defect is the smallest fractional ideal in $k$
spanned by an element of the form $a-b^2$ (c.f. \cite{Om}).
 For the square class representatives shown above,
this is computed as follows:
 $$\qdef(1)=\{0\},\qquad \qdef(\Delta)=(4),\qquad
\qdef(\pi_n)=(\pi),\qquad \qdef(u_n)=(\pi^{2s+1})$$
for some integer $s=s(u_n)$ satisfying $0\leq s<\nu(2)$.

Let $\mathfrak{t}=\mathfrak{t}(k)$ be the Bruhat-Tits tree, with the tree-distance $\delta$ defined in \S1.
 For every vertex $v$, we define the ball of radius $p$ around $v$, as the full subgraph 
$\mathfrak{b}=\mathfrak{b}_v[p]$ whose vertex set satisfies
$V_{\mathfrak{b}}=\{w\in V_{\mathfrak{t}}| \delta(v,w)\leq p\}$.
  For every walk $w=\dots v_{i-1}v_iv_{i+1}\dots$ in $\mathfrak{t}$,
finite or not, we call the interval $I(w)=\{\dots,i-1,i,i+1,\dots\}\subseteq\mathbb{Z}$ its index interval.
A thick path is a full subgraph of the form 
$\mathfrak{s}=\bigcup_{n\in I(w)}\mathfrak{b}_{v_n}[p]$, where $w=\dots v_{i-1}v_iv_{i+1}\dots$ is a walk.
The integer $p\leq0$ is called the depth of $\mathfrak{s}$, while 
$\mathfrak{m}=\{v_i|i\in I(w)\}$ is called the stem.
Note that $\mathfrak{s}=\mathfrak{m}$ if $p=0$. 
As mentioned in \S1,  the branch $\mathfrak{s}(\Omega)$ is a thick path for most orders $\Omega$.
Many combinatorial properties of the branch
 can be described in terms of two invariants. The stem length $l$, i.e., the length of $w$,
and  the depth $p$. The computation of local embedding numbers \cite{omeat}
 or representation fields for global orders \cite{Eichler2}, \cite{cqqgvro}, 
reduces to determining these invariants.
This was done explicitly in \cite{osbotbtt},
 for orders generated by a pair of orthogonal pure quaternions as in (\ref{133}).
In fact, we already gave in \cite[Prop. 2.3]{osbotbtt} and \cite[Prop. 2.4]{osbotbtt} 
a method to do this in full generality, provided that the
relative position between the branches is known.  
The latter piece of data was collected, for orders generated by orthogonal pure quaternions,
  by thickenning the stem of the thick path corresponding to either generator, 
until we found a minimal setting where both branches do intersect, or equivalently, there exists a maximal order
containing each of the corresponding orders.
This kind of computations could be extended with enough work, but the method shown here is far simpler. 
It consist in giving a precise location for the stem of an order in terms of the simmetric product
$ij+ji\in K$. One this is known, the invariants can be computed by the results in \cite{osbotbtt}.

\begin{theorem}\label{t21}
 Let $i,j \in \mathbb{M}_2(k)$ be pure quaternion satisfying  $i^2=\alpha$, $j^2=\beta$ and $ij+ji=2\lambda$. Assume $\alpha$ and $\beta$ belong to a set of representatives of the form
$$Q=\{1,\Delta,u_1,\dots,u_N,\pi_1,\dots,\pi_{N+2}\},$$
 of all square classes, where $\Delta$ is a unit of minimal quadratic defect, 
$\{u_1,\dots,u_N\}$ is a set of representatives of all ramified 
units, while $\{\pi_1,\dots,\pi_{N+2}\}$ is a set of representatives of all uniformizing parameters. Let $d_f$
be the function defined case by case as follows:
\begin{enumerate}
\item If $\alpha,\beta\in\{1,\Delta\}$, then $d_f=  -\frac{1}{2}\nu \left( \frac{\lambda^2-\alpha\beta}{4} \right)$.
\item If $\alpha\in\{1,\Delta\}$, while $\beta\notin\{1,\Delta\}$ and $\qdef(\beta)=(\pi^{2t+1})$, then
 $d_f=t-\frac{1}{2}\nu \left( \lambda^2-\alpha\beta \right)$.
\item If $\alpha\notin\{1,\Delta\}$ and $\qdef(\alpha)=(\pi^{2s+1})$, while $\beta\in\{1,\Delta\}$, then
 $d_f=s-\frac{1}{2}\nu \left( \lambda^2-\alpha\beta \right)$.
\item If $\{\alpha,\beta\}\cap\{1,\Delta\}=\emptyset$, while $\qdef(\alpha)=(\pi^{2s+1})$
and $\qdef(\beta)=(\pi^{2t+1})$, then
$d_f=s+t-\frac{1}{2}\nu \big(4(\lambda^2-\alpha\beta) \big)$.
\end{enumerate}
Then if $d_f>0$, it equals the distance between the stems, otherwise the length of the intersection is
$$\mathrm{min}\{-2d_f,l(i),l(j)\},$$
where $l(q)$ is the stem length of $q$, which is $0$, $1$ or $\infty$ \cite{Eichler2}.
\end{theorem}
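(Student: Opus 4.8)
The plan is to reduce the general statement to the orthogonal case treated in \cite{osbotbtt} via the embedding of (a subdivision of) $\mathfrak{t}(k)$ into $\mathfrak{t}(L)$ constructed earlier in the paper, where $L$ is a suitable quadratic extension of $k$. The key observation is that the order $\Omega=\oink[i,j]$ contains the commutative suborder $\oink[i]$, which, after rescaling $i$ to land in $Q$, spans the field $K=k(i)=k(\sqrt\alpha)$ (or the split algebra $k\times k$ when $\alpha$ is a square). Working inside $\alge\otimes_k K\cong\matrici_2(K)$, the element $i$ becomes diagonalizable, and the branch of $\Omega$, viewed through the field extension, becomes the branch of an order spanning a \emph{split} commutative algebra over $K$ — precisely the situation amenable to the ball-picture computations of \cite{omeat} and \cite{osbotbtt}.

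First I would record the normalization: replacing $i$ by $i/\sqrt\alpha$ and $j$ by $j/\sqrt\beta$ inside the big algebra conjugates $\Omega$ to an order generated by two \emph{unit} pure quaternions $i',j'$ with $(i')^2=(j')^2=1$ and symmetric product $i'j'+j'i'=2\lambda'$ where $\lambda'=\lambda/\sqrt{\alpha\beta}$; the quantity $\lambda^2-\alpha\beta$ transforms into $(\alpha\beta)\big((\lambda')^2-1\big)$, so $\nu\big(\lambda^2-\alpha\beta\big)$ picks up the valuation contributions from $\sqrt\alpha$ and $\sqrt\beta$ that account for the $s$, $t$, and the factors of $4$ in the four cases of the definition of $d_f$. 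Next I would invoke the explicit description of the stem of a single pure-quaternion order: by \cite[Prop. 5.3]{Eichler2}, \cite[Prop. 5.4]{Eichler2} the branch $\mathfrak{s}(\oink[i])$ is a thick path whose stem is a specific segment of length $l(i)\in\{0,1,\infty\}$ determined by the square class of $\alpha$ — the end(s) of this stem correspond to the fixed points of $i$ acting on the boundary of $\mathfrak{t}(L)$, i.e., to the two eigenlines of $i$ over $L$. The core of the argument is then a distance computation in $\mathfrak{t}(L)$: the relative position of the stems of $\oink[i]$ and $\oink[j]$ is governed by how far apart the eigenline-ends of $i$ are from those of $j$, and this separation is controlled by the ``cross-ratio'' type quantity built from $\lambda$, which after the normalization above is exactly $\frac{\lambda^2-\alpha\beta}{4}$ (resp. its variants). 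Concretely, one checks that in an eigenbasis for $i$ the matrix $j$ has off-diagonal entries whose valuations encode the distance $d_f$, and then one translates back through the subdivision map from $\mathfrak{t}(L)$ to $\mathfrak{t}(k)$ — the halving inherent in a ramified quadratic extension is the source of the factor $\frac12$ in every case.

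Once the distance $d_f$ between the stems (computed in the $k$-tree) is in hand, the dichotomy in the conclusion is immediate from \cite[Prop. 2.3]{osbotbtt} and \cite[Prop. 2.4]{osbotbtt}: those propositions say that, given the relative position of two branches, the branch of the order they generate is a thick path whose stem either (when the branches are disjoint, $d_f>0$) has a ``bridge'' of length $d_f$ joining them, or (when they overlap) has an intersection whose length is the minimum of the overlap $-2d_f$ with the individual stem lengths $l(i)$ and $l(j)$ — the $\min$ reflecting that the intersection of two thick paths cannot be longer than either stem. So the proof structure is: (i) normalize generators to units; (ii) pass to $K$/$L$ and diagonalize $i$; (iii) compute the end-separation and hence $d_f$ by a valuation estimate on the entries of $j$, handling the four square-class cases for $\alpha$, $\beta$; (iv) descend the distance to $\mathfrak{t}(k)$ via the subdivision embedding; (v) quote \cite{osbotbtt} for the final combinatorial conclusion.

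The main obstacle I expect is step (iii)–(iv): making the valuation bookkeeping uniform across the four cases while correctly tracking what the subdivision-embedding does to distances. When $\alpha$ or $\beta$ is a square the relevant extension is split and no subdivision occurs, so the formula has no extra $4$; when exactly one is a ramified non-unit (or a ramified unit with defect $(\pi^{2s+1})$) only one halving happens; when both are, the two contributions $s$ and $t$ add and one gets the full $4(\lambda^2-\alpha\beta)$ inside the valuation. Getting the constants to match in all four cases — in particular verifying that the quadratic defect exponent $s$ (resp. $t$) is exactly the amount by which passing to $k(\sqrt\alpha)$ (resp. $k(\sqrt\beta)$) shifts the relevant vertex in the tree — is the delicate part; everything else is either a direct appeal to the cited results or a short linear-algebra computation in $\matrici_2(L)$. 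I would also need to double-check the degenerate subcases where $l(i)$ or $l(j)$ is $0$ or $\infty$ (idempotent orders, i.e., $\alpha$ or $\beta$ a square), since there the ``stem'' is a single vertex or a horoball and the intersection length formula must still be read correctly.
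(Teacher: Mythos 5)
Your skeleton coincides with the paper's own route: rescale to $i_0=i/\sqrt\alpha$, $j_0=j/\sqrt\beta$ over $L=k(\sqrt\alpha,\sqrt\beta)$, compare the ghost stems $\mathfrak{m}_L(i)$ and $\mathfrak{m}_L(j)$ using the branch-over-extension formalism, and descend to $\mathfrak{t}(k)$. But the proposal leaves exactly the paper's quantitative work undone, and the two places where you say how the constants arise are off. First, the constants do not come from ``the halving inherent in a ramified quadratic extension'': valuation and tree distance on $\mathfrak{t}(L)$ are normalized in the paper to extend those of $k$, so no rescaling occurs on descent, and the $\frac12$ in front of $\nu$ is already present in case (1), where no extension is needed at all. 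What actually produces $s$, $t$ and the residual $-\frac12$ or $-1$ is the location of the $k$-stem: for $\alpha\notin\{1,\Delta\}$ the stem $\mathfrak{m}_k(i)$ does not lie on the ghost stem but is an edge of the $k$-vine whose midpoint is a ghost vertex at normalized distance $-\frac12\nu\big((\eta^2-\alpha)/4\alpha\big)$ from $\mathfrak{m}_L(i)$, where $(\eta^2-\alpha)=\qdef(\alpha)$, while for $\alpha=\Delta$ the stem is the single $k$-rational vertex at the top of the ghost path; the corrections $-\frac12$ and $-1$ then appear because the geodesic between the two ghost stems passes through an endpoint of such an edge rather than through its midpoint. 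This is the content of the paper's Figures 3--7 analysis, and it is precisely the step you defer as ``the delicate part''; without it none of the four formulas for $d_f$ is actually derived.

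Second, the case $d_f\le 0$ cannot be discharged by citing \cite{osbotbtt}: by the paper's own description, Propositions 2.3--2.4 there compute the invariants of the branch of $\oink[i,j]$ \emph{given} the relative position of the two branches, whereas the quantity $\min\{-2d_f,l(i),l(j)\}$ \emph{is} the relative position, namely the length of $\mathfrak{m}_k(i)\cap\mathfrak{m}_k(j)$; invoking them here is circular. Moreover, since the $k$-stems are generally not contained in the ghost stems, the heuristic ``the intersection cannot be longer than either stem'' does not by itself convert the $L$-overlap $-2d_f$ into the $k$-intersection length: one must identify which vertices of the ghost overlap are $k$-rational (for $\alpha=\beta=\Delta$ only the common top vertex is, giving length $0$), and in the doubly ramified case with coincident $k$-stem edges one must still prove $d_f\le-\frac12$, which the paper obtains by observing that the path between the ghost stems then avoids the endpoints of the common edge. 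Finally, note that $l(\cdot)=\infty$ corresponds to $\alpha$ or $\beta$ being a square (split stem), not to an ``idempotent order'' in the paper's terminology. So the approach is the right one and matches the paper, but as it stands the proof is missing its core computations.
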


In what follows, the possibly negative function $d_f$ is refered to as the fake distance.
It is a distance only when it is non-negative.

Recall that an embedding $\phi:\Omega\rightarrow\Da$ is called optimal if $\hat{\phi}^{-1}(\Da)=\Omega$,
where $\hat{\phi}:k\otimes_\oink\Omega\rightarrow \matrici_2(k)$ is the natural extension. A suborder
$\Omega\subseteq\Da$ is optimal when the inclusion is an optimal embedding.
For any quadratic extension $L/k$, any order $\Omega=\oink_L^{\{t\}}$ of maximal rank in $L$, 
where $\oink_L$ is the ring of integers in $L$, and any  Eichler order $\Ea\subseteq\matrici_2(k)$
of level $r$, we let $X$ be the set of optimal embeddings  $\phi:\Omega\rightarrow\Ea$ and let $Y$
 be the set of optimal suborders of $\Ea$ that are isomorphic to  $\Omega$. 
We also let $\Gamma_1=k^*\Ea^*$, where $A^*$ denotes the group of units of a ring $A$,
 and let $\Gamma_2$ be the normalizer of $\Ea$ in $\mathrm{GL}_2(k)$.
The embedding numbers $e_i=e_i(\Ea|\Omega)$ are the following quantities:
$$e_1=\#(X/\Gamma_1),\quad e_2=\#(X/\Gamma_2),
\quad e_3=\#(Y/\Gamma_1),\quad e_4=\#(Y/\Gamma_2).$$
We call them, respectively, first, second, third, and fourth embedding number.
We use the embedding vector 
$\stackrel{\rightarrow}e=(e_1,e_2,e_3,e_4)$ to simplify the statements below.
When $\alpha$ is a real number, we denote its integral part by $[\alpha]
=\mathop{\mathrm{max}}\{n\in\mathbb{Z}|n\leq\alpha\}$.
   For any triple $(r,u,t)\in(\mathbb{Z}_{\geq0})^3$ 
satisfying $v\leq u\leq [r/2]$, for $v=\max\{0,r-t\}$, we consider the cardinality
\begin{equation}\label{999}
\chi(r,u,t)=\#\left\{ \bar a\in\left(\frac{\oink}{\pi^{t-r+2u}\oink}\right)^*\bigg|\begin{array}{c}
\bar a^2=
\bar1 \textnormal{ and } \nu(a-1)=\nu(\pi^{t-r+u})
\\ \textnormal{ for any lifting }a\in\oink\textnormal{ of }\bar{a}\end{array}
\right\},
\end{equation}
 which we set as $1$ for $u=0$. Note that $\nu(a-1)$ depends only on $\bar{a}$ if
$\bar{a}\neq \bar{1}$.
Let $\chi_3=\chi_3(r,t)=\sum_{u=v}^{h}\chi(r,u,t)$, and set it as $0$ if the sum is empty.

\begin{theorem}\label{t22}
Let $L/k$ be a quadratic extension.
Let $\Ea$ be an Eichler order of level $r>0$ and let $\Omega\subseteq\Ea$ be an order isomorphic to 
$\oink_L^{\{t\}}$. Then there exists optimal embeddings of $\Omega$ into $\Ea$ if and only if one
of the following conditions hold:
\begin{itemize}
\item $r\leq 2t$ and $L/k$ is unramified, or
\item $r\leq 2t+1$ and $L/k$ is ramified.
\end{itemize}
Furthermore, when optimal embeddings do exist, the values for the embedding numbers
are given by the formula $$\stackrel{\rightarrow}e=\frac {p^{[r/2]}}2(4,2,2,1)-\frac{m}2(2,1,1,0)
+\frac{\chi_2}4(0,2,0,1)+\frac{\chi_3}4(0,0,2,1),$$
where  $\chi_3$ is as above, while $m$ and $\chi_2$ are as in Table 1.
 If $r=0$, then $\stackrel{\rightarrow}e=(1,1,1,1)$.
\end{theorem}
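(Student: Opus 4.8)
The plan is to turn the whole statement into combinatorics on $\mathfrak{t}(k)$, using the description of $\mathfrak{s}(\Omega)$ for $\Omega\cong\oink_L^{\{t\}}$ that the extension technique provides. Tensoring with $\oink_L$ sends $\Omega$ to an order spanning $L\otimes_kL\cong L\times L$, so by \cite{omeat} (applied over $L$) its branch in $\mathfrak{t}(L)$ is the thick path around the axis of a split torus; pulling this back along the (suitably subdivided) embedding $\mathfrak{t}(k)\hookrightarrow\mathfrak{t}(L)$ shows that $\mathfrak{s}(\Omega)$ is a ball of radius $t$ around a vertex when $L/k$ is unramified, and a ball of radius $t$ around an edge (a thick path of stem-length $1$ and depth $-t$) when $L/k$ is ramified. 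In either case the extremal, valency-one vertices of $\mathfrak{s}(\Omega)$ are the vertices $\D$ in which $\Omega$ is optimal, and the two that are farthest apart are at distance $2t$, resp. $2t+1$.

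With this picture the existence clause is immediate. An optimal embedding $\phi\colon\Omega\to\Ea$ amounts to placing the length-$r$ path $\mathfrak{p}$ of $\Ea$ inside $\mathfrak{s}(\phi(\Omega))$ so that $\mathfrak{p}$ meets the outer sphere, equivalently is not already contained in $\mathfrak{s}(\oink_L^{\{t-1\}})$; and one checks that as soon as $\mathfrak{p}$ fits in $\mathfrak{s}(\Omega)$ at all, the boundary can always be reached. Hence optimal embeddings exist iff $r$ is at most the diameter of $\mathfrak{s}(\Omega)$, i.e. $r\le 2t$ when $L/k$ is unramified and $r\le 2t+1$ when $L/k$ is ramified. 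The case $r=0$ is the degenerate one where $\mathfrak{p}$ is a single vertex, which must then be an extremal vertex of the branch; there is one such up to each of the relevant groups and the reflection and $\sigma$ act trivially, so one reads off $\stackrel{\rightarrow}e=(1,1,1,1)$.

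For $r>0$ I would invoke the standard dictionary (as in \cite{omeat}) that turns the four embedding numbers into orbit counts on $\mathfrak{t}(k)$, where $\Gamma_1$ acts fixing $\mathfrak{p}$ pointwise and $\Gamma_2$ is $\Gamma_1$ enlarged by the reflection interchanging the ends of $\mathfrak{p}$: $e_1$ counts $\Gamma_1$-orbits of subfields $L'\subseteq\matrici_2(k)$ together with a chosen isomorphism $\Omega\cong\oink_{L'}^{\{t\}}$ (two choices, differing by the Galois involution $\sigma$ of $L/k$, whose class generates the relevant automorphism group of $\Omega$), subject to $\mathfrak{p}\subseteq\mathfrak{s}(\oink_{L'}^{\{t\}})$ meeting its outer sphere; $e_3$ counts the same data modulo $\sigma$ as well; $e_2,e_4$ are the analogues for $\Gamma_2$. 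A routine orbit-counting argument then gives the relations $e_1=2e_3-\chi_3=2e_2-\chi_2$, together with an analogous formula for $e_4$, where $\chi_3$ (resp. $\chi_2$) is the number of $\Gamma_1$-orbits fixed by $\sigma$ (resp. by the reflection); so it remains to compute the main term of $e_1$ and to identify these two symmetry counts.

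The main term is a tree count: a branch $\mathfrak{s}(\oink_{L'}^{\{t\}})\supseteq\mathfrak{p}$ meeting the outer sphere is pinned down by the position of its center relative to $\mathfrak{p}$ — which the triangle inequality forces onto or near $\mathfrak{p}$, its radial coordinate then being fixed by the boundary condition — together with the data of the relevant torus at the center; counting these modulo the pointwise stabilizer of $\mathfrak{p}$ amounts to counting pointed geodesic configurations through the center modulo the torus action, and produces $\tfrac{p^{[r/2]}}{2}(4,2,2,1)-\tfrac{m}{2}(2,1,1,0)$, where $m$, far from a small error term, is itself a genuine count recorded case by case in Table 1. For the symmetry counts: a configuration lies in a $\sigma$-fixed orbit exactly when $\mathfrak{p}$ is carried to itself by the Galois isometry through the center of the branch, and sorting by the depth $u$ at which $\mathfrak{p}$ sits reduces this to counting classes $\bar a$ of square roots of $\bar1$ with the prescribed valuation of $a-1$ — precisely the cardinality $\chi(r,u,t)$ of \eqref{999}; summing over the admissible range of $u$ yields $\chi_3$, and the reflection-symmetric count is handled the same way, giving $\chi_2$ and the $\tfrac{\chi_2}{4}(0,2,0,1)+\tfrac{\chi_3}{4}(0,0,2,1)$ contribution. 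I expect the real work — and the only genuine obstacle — to be exactly this last stage: running the enumeration cleanly through the four combinations of ramification of $L/k$ and parity of $r$ (plus the small-$t$ boundary cases), keeping straight the honest distinction between an embedding and its branch so that the factors relating $e_1$ to $e_2,e_3$ and the $\tfrac14$-coefficients come out right, and pinning down $m$; the main term and the existence clause are comparatively routine once the shape of $\mathfrak{s}(\oink_L^{\{t\}})$ is in hand.
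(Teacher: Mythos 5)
Your outline reproduces the paper's own strategy almost step for step: identify $\mathfrak{s}_k(\Omega)$ as a ball of radius $t$ about a vertex (unramified case) or an edge (ramified case), read the existence clause off its diameter, count for $e_1$ the admissible relative positions of the length-$r$ Eichler path inside the branch with an optimal endpoint, pass to $e_2$ and $e_3$ by averaging over the reflection of the Eichler path and the Galois flip of the ghost stem (your relations $e_1=2e_2-\chi_2=2e_3-\chi_3$ are exactly the paper's $e_2=\frac12(e_1+\chi_2)$, $e_3=\frac12(e_1+\chi_3)$), obtain $e_4$ by Burnside's lemma for the Klein four-group, and treat $r=0$ by transitivity. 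So the route is the same; the problem is that you stop exactly where the paper's proof has its content, and you flag that yourself (``the real work \dots I expect to be exactly this last stage'').

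Two gaps are genuine. First, the dictionary ``same tree invariant $\Rightarrow$ conjugate'' is not free over $k$: the paper classifies orbits of the relevant configurations (the two ends of the ghost stem, which form a $\mathrm{Gal}(L/k)$-orbit in $\mathbb{P}^1(L)$, together with the Eichler path) by a cross-ratio congruence and then needs its Hilbert--90 descent lemma (Lemma 5.3) to guarantee that the M\"obius transformation matching two such Galois-stable configurations, and stabilizing the walk of $\Ea$, has coefficients in $k$; this lemma is invoked for $e_1$, for the symmetric counts, and again for $r=0$. Your substitute --- transitivity of ``the torus action'' on pointed configurations through the center, and the implicit double-coset swap between orbits of branches containing $\mathfrak{p}$ and orbits of paths through the center --- is precisely the assertion that needs proof: conjugacy over $L$ is clear, conjugacy under $\Gamma_1\subseteq\mathrm{GL}_2(k)$ is the issue. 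Second, the quantitative content is deferred but it \emph{is} the theorem: the count $q^{[r/2]}$ of admissible positions (the paper's returning-point argument, which is also where $r\le 2t$, resp.\ $r\le 2t+1$, actually gets used), the case-by-case values of $m$ in Table 1, and the identification of the symmetric counts with $\chi(r,u,t)$. On the last point your sketch is also slightly off: only $\chi_3$ is a sum over $u$; $\chi_2$ is a single term $\chi(r,h,t)$ or $\chi(r,t,t)$, because the reflection symmetry forces the two endpoints of the Eichler path to be equidistant from the ghost stem, which pins $u$ (and forces $\chi_2=0$ for odd $r<2t$). Saying the reflection count is ``handled the same way'' glosses over exactly this equidistance condition, which is where Table 1 comes from.
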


\begin{table}
\begin{tabular}{|c|c|c|c|}
\hline $L/k$ &
$r$&$m$&$\chi_2$\\
\hline\hline
Unramified&$r=2h+1<2t$&$0$
&$0$\\ \hline
Unramified&$r=2h<2t$&$(q-1)q^{h-1}$
&$\chi(r,h,t)$\\ \hline
Unramified&$r=2t$&$q^{t}$
&$\chi(r,t,t)$\\ \hline
Ramified&$r=2h+1<2t$&$0$
&$0$\\ \hline
Ramified&$r=2h<2t$&$(q-1)q^{h-1}$
&$\chi(r,h,t)$\\ \hline
Ramified&$r=2t$&$(q-1)q^{t-1}$
&$\chi(r,t,t)$\\ \hline
Ramified&$r=2t+1$&$q^t$
&$\chi(r,t,t)$\\ \hline
\hline
\end{tabular}
\caption{The invariants $m$, and $\chi_2$ for the order $\oink_L^{\{t\}}$.}
\end{table}

\section{Trees and Ghost branches}

 The ends of the Bruhat-Tits tree $\mathfrak{t}(k)$, as defined in \S1, are in correspondence with
the elements of the projective line $\mathbb{P}^1(k)$ \cite[\S4]{omeat}. This can be seen by associating,
to each ball $B_a^{[r]}:=B_a\big[|\pi|^r \big]$, the endomorphism ring of the lattice 
$\left\langle\left(\begin{array}{c}a\\1\end{array}\right),\left(\begin{array}{c}\pi^r\\0\end{array}\right)
\right\rangle$, which is a maximal order. This defines a correspondence between balls and maximal orders
that can be used to define a tree whose vertices are balls, while two balls are neighbors if one is a
maximal sub-ball of the other.

 The largest
subgraph whose vertices contain an order $\Omega$, as before, is denoted $\mathfrak{s}(\Omega)$,
or $\mathfrak{s}_k(\Omega)$ is we need to emphasize the field, as it is the case in all that follows. 
If $\Omega=\oink[a_1,\dots,a_n]$, we write $\mathfrak{s}_k(a_1,\dots,a_n)$.
When $\mathfrak{s}_k(\Omega)$ is a thick path, which is usually the case,  as described in \S1,
 we denote its stem by $\mathfrak{m}_k(\Omega)$. 
The notation $\mathfrak{m}_k(a_1,\dots,a_n)$ is defined analogously.
Note that $\mathfrak{s}_k(a_1,\dots,a_n)=\bigcup_{i=1}^n\mathfrak{s}_k(a_i)$, but the corresponding
property for stems is usually false. 

With the preceding definitions, the vertices of Bruhat-Tits tree $\mathfrak{t}(k)$, 
for a finite field extension $L$ of $k$, can be identified with a subset of the vertices in $\mathfrak{t}(L)$,
via $\Da\mapsto\oink_L\otimes_{\oink_k}\Da$.
However,  the map $\mathfrak{t}(k)\hookrightarrow \mathfrak{t}(L)$ is not a morphism of graphs subgraph,
unless $L/k$ is an unramified extension.
To fix this, we normalize, in all that follows, for any finite extension $L/k$, both the valuation in $L$
and the tree distance in $\mathfrak{t}(L)$, in a way that both are extensions of the corresponding
functions defined on $k$. In particular, for any uniformizing parameter $\pi_L\in L$ we set
$\nu(\pi_L)=\frac 1e$, where $e=e(L/k)$ is the ramification index, and let $|a|=c^{\nu(a)}$,
for a suitable positive constant $c<1$, denote the absolute value. Similarly,
 $\delta(v,v')=\frac1e$ for
neighboring vertices in $\mathfrak{t}(L)$. For any pair of ends $a,b\in\mathbb{P}^1(L)$ we denote
by $\mathfrak{p}(a,b)$ the smallest tree containing these two ends. This is a maximal path, i.e.,
a graph whose vertices are precisely the vertices in a double infinite walk. 
For any subgraph $\mathfrak{b}$, and any vertex
$v$, we denote by $p_{k}(v;\mathfrak{b})$ the radius of the largest ball in $\mathfrak{t}(k)$, 
with center $v$, contained in $\mathfrak{b}$.  The depth $p_k(\mathfrak{b})$ of the subgraph 
$\mathfrak{b}$  is the maximal deph of its vertices. Ball radii and depth are normalized according to
our general pattern, e.g., $p_L(v;\mathfrak{b})$ is a multiple of $\frac1e$,
while $B_0^{[1/e]}$ denotes the ball of radius $|\pi_L|$ centered at $0$.
With this conventions, the vertices in $\mathfrak{s}_k(a_1,\dots,a_n)$ can be identified with
the corresponding vertices in  $\mathfrak{s}_L(a_1,\dots,a_n)$. It is not always the case,
however, that the vertices in  $\mathfrak{m}_k(a_1,\dots,a_n)$ are vertices of
$\mathfrak{m}_L(a_1,\dots,a_n)$ (c.f. \S5). 

When $L/k$ is a Galois extension, the galois group
$\mathrm{Gal}(L/k)$ acts on both $\mathbb{P}^1(L)$ and $\mathfrak{t}(L)$. It is not hard to see,
using the explicit correspondence between balls and maximal orders mentioned at the begining of
this section, that these two actions are compatible, 
in the sense that an element $\sigma\in \mathrm{Gal}(L/k)$
maps the maximal path $\mathfrak{p}(a,b)$ defined above onto
the maximal path $\mathfrak{p}\Big(\sigma(a),\sigma(b)\Big)$.

On the other hand, the action of $\mathrm{Gal}(L/k)$ on $V_{\mathfrak{t}(L)}$ leaves invariant many vertices
that fail to belong to $V_{\mathfrak{t}(k)}$. In fact, a ball $B=B_a[|u|]$ is invariant if and only if
$|\sigma(a)-a|\leq |u|$ for every element $\sigma\in \mathrm{Gal}(L/k)$. This condition is satisfied
by every ball of the form $B_a[|u|]$ with $a\in k$ even if $|u|\notin|k^*|$. 
However, there are many invariant balls without an invariant center. An example is the ball
$B_z^{[w]}$ in Figure 3C (c.f. \S5).

There is also a natural action of the group of Moebius transformation on the set of balls that correspond to
the $\mathrm{PSL_2}(k)$-action on maximal orders by conjugation. It can be define by associating, to each ball
$B$, a partition $\mathbb{P}^1(k)=B^{\mathit{c}}\cup B_1\cup\cdots\cup B_q$ of the projective line 
into balls and complements of balls. The balls $B_1,\dots,B_q$ are the neighboring sub-balls of $B$.
Moebius transformations act naturally on those partitions \cite{omeat}.
 This action is compatible with the action of the Galois group. 

In what follows, we refer to vertices in $\mathfrak{t}(L)$ that are not in $\mathfrak{t}(k)$ as ghost vertices.
Similarly, any maximal path $\mathfrak{p}(a,b)\subseteq \mathfrak{t}(L)$ with $a,b\in k$ is identified with
the corresponding path in $\mathfrak{t}(k)$. Maximal paths that are not of this form are called ghost paths.
This is particularly important in the sequel, as the branch of the order, in $\mathbb{M}_2(L)$, generated
by an integral domain $\Omega\subseteq\mathbb{M}_2(k)$,  have a doubly infinite path as a 
stem for an appropiate quadratic extension $L$. We refer to this path as the ghost stem of $\Omega$,
and usually fails to contain the stem $\mathfrak{m}_k(\Omega)$ which has either one or two vertices
\cite{Eichler2}. 
 A similar convention applies to finite paths.

\section{The path of an idempotent}\label{s4}

For every pair $(a,b)$ of different elements in $k$ we define 
$\tau_{a,b}\in\mathbb{M}_2(k)$ as the only idempotent satisfying
the following conditions:
$$\ker(\tau_{a,b})=\left\langle\vvect a1\right\rangle,\qquad \im(\tau_{a,b})=\left\langle\vvect b1\right\rangle.$$
If $a$ or $b$ is $\infty$, we replace the corresponding generator by \scriptsize $\vvect 10$ \normalsize.
Thus $\tau_{a,b}$ is defined for every pair $(a,b)$ of different elements in $\mathbb{P}^1(k)$.
It is easy to see that $\tau_{b,a}=1-\tau_{a,b}$.
Recall that the group $\mathrm{PGL}_2(k)$, which is isomorphic to the group $\mathcal{M}(k)$ of M\"obius 
transformations, acts transitively on both, nontrivial idempotents by conjugation,
 and ordered pairs of distinct elements in 
$\mathbb{P}^1(k)$ as M\"obius  transformations.
  It is immediate from the definitions that both actions are compatible, namely, if $\mu$ is the M\"obius
transformation corresponding to the matrix $A$, then $A\tau_{a,b}A^{-1}=\tau_{\mu(a),\mu(b)}$. 
Since any non-trivial idempotent in $\mathbb{M}_2(k)$ has a one-dimensional image and a one-dimensional 
kernel, every idempotent equals $\tau_{a,b}$ for some ordered pair $(a,b)\in\mathbb{P}^1(k)^2$
 satisfying $a\neq b$. Furthermore, by applying a  M\"obius 
transformation, we can assume $a=0$ and $b=\infty$, so $\tau_{a,b}=\sbmattrix 1000$
is contained in exactly the maximal orders of the form
$\Da_n=\sbmattrix{\oink}{\pi^{-n}\oink}{\pi^n\oink}{\oink}$, which are the ones corresponding to balls
in the path joining $0$ and $\infty$. We conclude that the same holds for every pair 
$(a,b)\in\mathbb{P}^1(k)^2$
where $a\neq b$. Therefore, we can identify idempotents with directed maximal paths,
or more precisely, doubly infinite walks, on the tree. The walk from the end $a$ to the end $b$
contains exactly the maximal orders containing $\tau_{a,b}$.  We use this identification in all that
follows without further ado. 
We can actually give an explicit formula for these idempotents, namely
$$\tau_{a,b}=\frac1{b-a}\bmattrix b{-ab}1{-a}.$$
Many result in this section and \S5 can be proved by extensive computations using the above
formula. We have chosen, however, indirect or geometrical proofs whenever possible for the sake
of brevity.

For any non-trivial idempotent $\tau$, the element $i=1-2\tau$ is a non-trivial solution of $x^2=1$, and conversely,
every non-trivial solution of the equation is of this form. Replacing $\tau$ by $1-\tau$ has the effect of
replacing $i$ by $-i$. Recall that the matrix algebra $\alge=\matrici_2(k)$, together with
the map sending each matrix $A=\sbmattrix xyzw$ to its adjugate matrix $\bar{A}=\sbmattrix w{-y}{-z}x$,
 is isomorphic, as an algebra with involution, to the split quaternion algebra
\scriptsize $\left(\frac{1,-1}k\right)$\normalsize.
 We identify these two algebras in the
remainder of this section.  A matrix satisfying $\bar{A}=-A$ is called a pure
quaternion. By a simple computation, we have $ij+ji=\overline{ij+ji}\in k$
for every pair of pure quaternions $i$ and $j$.

\begin{lemma}\label{rep1}
Let $\lambda\in k$ be any element, and let
$\mathcal{A}(\lambda)$ be the algebra defined, in terms of generators and relations, by
$$\mathcal{A}(\lambda)=k\Big[i,j\Big|i^2=j^2=1, ij+ji=2\lambda\Big].$$
Then there is, up to conjugation,
 a unique representation $\phi:\mathcal{A}(\lambda)\rightarrow \matrici_2(k)$, for which
$\phi(i)$ and $\phi(j)$ are linearly independent. Furthermore, $\phi$ is an isomorphism
unless $\lambda=\pm1$.
\end{lemma}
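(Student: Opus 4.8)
The plan is to set up explicit matrices and then argue uniqueness via conjugation. First I would pin down the image of $i$: since $i^2=1$ and $i\neq\pm1$, by the discussion preceding the lemma $i$ corresponds to a nontrivial idempotent, so after conjugating by a suitable element of $\mathrm{GL}_2(k)$ we may assume $\phi(i)=\diag(1,-1)$, i.e. $i=1-2\tau_{0,\infty}$. This uses up most of the conjugation freedom; the remaining freedom is conjugation by the torus of diagonal matrices (plus the Weyl element swapping the two eigenlines, which sends $i\mapsto i$ as well but exchanges the roles of $0$ and $\infty$). Next I would write $\phi(j)=\bmattrix xyzw$ and impose the two relations $j^2=1$ and $ij+ji=2\lambda$. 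A direct computation gives $ij+ji=\bmattrix{2x}{0}{0}{-2w}$, wait — more carefully $ij+ji=2\,\diag(x,w)$ since $i=\diag(1,-1)$, so the relation $ij+ji=2\lambda$ forces $x=w=\lambda$. Then $j^2=1$ with $j=\bmattrix{\lambda}{y}{z}{\lambda}$ gives $\lambda^2+yz=1$ on the diagonal and $0$ off-diagonal automatically, so the single remaining condition is $yz=1-\lambda^2$.

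Now I would split into cases. If $\lambda\neq\pm1$, then $1-\lambda^2\neq0$, so neither $y$ nor $z$ vanishes; conjugating by $\diag(t,1)$ replaces $(y,z)$ by $(ty,t^{-1}z)$, so we can normalize $y=1$, forcing $z=1-\lambda^2$. This yields the single explicit representation $\phi(i)=\diag(1,-1)$, $\phi(j)=\bmattrix{\lambda}{1}{1-\lambda^2}{\lambda}$, and one checks $\phi(i),\phi(j)$ are linearly independent (they are, since $\phi(j)$ is not diagonal), proving existence and uniqueness up to conjugation simultaneously. To see $\phi$ is an isomorphism for $\lambda\neq\pm1$: $\mathcal{A}(\lambda)$ is spanned as a $k$-module by $1,i,j,ij$, hence has dimension at most $4$; the images $1,\phi(i),\phi(j),\phi(i)\phi(j)$ are linearly independent in $\matrici_2(k)$ (a short determinant check, using $1-\lambda^2\neq 0$), so $\phi$ is surjective onto the $4$-dimensional algebra $\matrici_2(k)$, and counting dimensions forces it to be an isomorphism.

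Finally, the case $\lambda=\pm1$: here $1-\lambda^2=0$, so $yz=0$, meaning $y=0$ or $z=0$; if both are $0$ then $\phi(i),\phi(j)$ are both diagonal hence dependent (indeed then $\phi(j)=\lambda\cdot 1$ or a combination), contradicting the hypothesis, so exactly one of $y,z$ is nonzero. The two sub-cases $y\neq0,z=0$ and $y=0,z\neq0$ are interchanged by the Weyl element, and within each the remaining torus action normalizes the nonzero entry to $1$; so again there is a unique representation up to conjugation with $\phi(i),\phi(j)$ independent, but now $\phi(j)$ is (conjugate to) an upper-triangular matrix with repeated eigenvalue $\pm1$, so $\phi(j)\mp1$ is nilpotent and $\phi(\mathcal{A}(\lambda))$ is the ring of upper-triangular matrices, which is not all of $\matrici_2(k)$ — hence $\phi$ is not an isomorphism. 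The main obstacle I anticipate is nothing computational but rather being careful about the exact residual conjugation freedom after normalizing $\phi(i)$, and making sure the normalization of $(y,z)$ is genuinely canonical in each case (i.e. that the Weyl-element ambiguity does not produce two inequivalent representations); this is handled by noting that the Weyl element fixes $\phi(i)$ and acts within the solution set, so all normalized choices remain conjugate.
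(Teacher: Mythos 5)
Your argument is a direct matrix computation rather than the paper's route (which, for $\lambda\neq\pm1$, passes to $j'=\lambda i-j$, invokes simplicity of quaternion algebras and Skolem--Noether, and handles $\lambda=\pm1$ geometrically via paths sharing an end). That route is legitimate, but as written it contains a sign error that invalidates the displayed formulas: with $\phi(i)=\diag(1,-1)$ and $\phi(j)=\sbmattrix xyzw$ one has $\phi(i)\phi(j)+\phi(j)\phi(i)=\sbmattrix{2x}00{-2w}$ (your first, correct formula), so the relation $ij+ji=2\lambda$ forces $x=\lambda$, $w=-\lambda$, not $x=w=\lambda$. With your form $\sbmattrix\lambda yz\lambda$ the off-diagonal entries of $j^2$ are $2\lambda y$ and $2\lambda z$, so they are not ``automatically zero'', and your normal form $\sbmattrix{\lambda}{1}{1-\lambda^2}{\lambda}$ satisfies neither $j^2=1$ nor $ij+ji=2\lambda$ unless $\lambda=0$. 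The repair is mechanical: take $\phi(j)=\sbmattrix{\lambda}{y}{z}{-\lambda}$, so $j^2=(\lambda^2+yz)\cdot 1$ and $yz=1-\lambda^2$; then for $\lambda\neq\pm1$ your normalization by the diagonal torus ($y=1$, $z=1-\lambda^2$), the independence of $1,i,j,ij$, and the dimension count all go through, and the case $\phi(i)=\pm1$ is excluded because it would force $\phi(j)=\pm\lambda$ scalar, contradicting independence (a remark you assert but do not prove).

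The genuine gap is in the case $\lambda=\pm1$. After the sign correction the two sub-cases are $\phi(j)=\sbmattrix{\lambda}{y}{0}{-\lambda}$ and $\phi(j)=\sbmattrix{\lambda}{0}{z}{-\lambda}$, and your claim that they are interchanged by the Weyl element ``which fixes $\phi(i)$'' is false: $\sbmattrix 0110$ conjugates $\diag(1,-1)$ to $-\diag(1,-1)$. Worse, no conjugation at all identifies the two normalized pairs: any $A$ with $A\,\diag(1,-1)\,A^{-1}=\diag(1,-1)$ is diagonal, hence preserves upper versus lower triangularity. For $\lambda=1$, the pairs $\big(\diag(1,-1),\sbmattrix 110{-1}\big)$ and $\big(\diag(1,-1),\sbmattrix 101{-1}\big)$ both satisfy the relations, have linearly independent entries, and are not conjugate (they differ by transposition, an anti-automorphism); geometrically they correspond to the shared end of the two associated paths being the common image versus the common kernel. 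So your uniqueness argument for $\lambda=\pm1$ does not stand as written, and it cannot be fixed by the Weyl element; either an additional argument identifying (or distinguishing) these two classes is needed, or the statement must be read accordingly. (Your computation in fact exposes a dichotomy that the paper's own proof also passes over when it reduces to $\omega=\tau_{a,b}$, $\eta=\tau_{a,d}$.) A smaller slip in the same case: the claim that $y=z=0$ makes $\phi(i),\phi(j)$ dependent ``because both are diagonal'' is not a valid reason; with the corrected sign it is true because then $\phi(j)=\pm\phi(i)$.
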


\begin{proof}
It is immediate from the definition that $\dim_K\Big(\mathcal{A}(\lambda)\Big)\leq4$.
If $\lambda\neq\pm1$, we observe that $i$ and $j'=\lambda i-j$ satisfy the standard relations
among the generators of a quaternion algebra, since
$$ij'=-j'i,\qquad (j')^2=(\lambda i-j)^2=1-\lambda^2\neq0.$$
It follows that $\mathcal{A}(\lambda)$ is a quotient of a quaternion algebra
and therefore it is a quaternion algebra since quaternion algebras are simple. In particular 
$\dim_K\Big(\mathcal{A}(\lambda)\Big)=4$, so that $i$ and $1$ are linearly independent and
 $\mathcal{A}(\lambda)$ cannot be a division algebra since the commutative subalgebra $k[i]$
has too many squares roots of $1$. We conclude that $\mathcal{A}(\lambda)\cong\matrici_2(k)$, so
the result follows from Skolem-Noether's Theorem.

Assume now that $\lambda=1$ or $\lambda=-1$. Replacing $j$ by $-j$ if needed, we can assume that
$\lambda=1$. Let $\phi:\mathcal{A}(1)\rightarrow\matrici_2(k)$ be a representation satisfying the hypoteses.
Let $\omega=\frac{\phi(j)+1}2$ and $\eta=\frac{\phi(i)+1}2$. Note that $\omega$ and $\eta$ are
idempotents, whence we can write $\omega=\tau_{a,b}$ and $\eta=\tau_{c,d}$, and the branches
$S(i)$ and $S(j)$ are tubular neighborhoods of the corresponding paths. Now consider
the nilpotent element $u_n=\pi^{-n}\phi(i-j)$, satisfying 
$u_n\phi(i)+\phi(i)u_n=u_n\phi(j)+\phi(j)u_n=0$. It is not hard to check that $\phi(j)$ and $u_n$
span an order containing also $\phi(i)$. On the other hand, the sequence $\{u_n\}$
leaves every compact subset of  $\matrici_2(k)$ and therefore $i$ and $j$ are contained simultaneously
in infinite many maximal orders. This is only possible if the paths $\mathfrak{p}(a,b)$ and 
$\mathfrak{p}(c,d)$ have a common end. 
They cannot coincide, since this would imply $\eta=1-\omega$ and therefore
$i=-j$, or else $\eta=\omega$ and therefore $i=j$.
 We can assume therefore that $\omega=\tau_{a,b}$ and $\eta=\tau_{a,d}$. Since the group
of M\"obius transformations act transitively on triples in $\mathbb{P}^1(k)^3$, we can assume
$(a,b,d)=(\infty,0,1)$, whence $\omega=\sbmattrix 0001$ and $\eta=\sbmattrix 0101$.
The result follows.
\end{proof}

Now consider a pair $(i,j)$ of pure quaternions  satisfying $i^2=j^2=1$, and let $\lambda=\frac12(ij+ji)$.
By the previous lemma, the conjugacy class of the pair $(i,j)$ is completely
determined by $\lambda$. On the other hand, if $\tau_{a,b}$ and $\tau_{c,d}$ are the 
corresponding idempotents, the orbit of the quartet $(a,b,c,d)$ under M\"obius transformations is
completely determined by the cross-ratio $t=[a,b;c,d]$. In fact, applying
a M\"obius transformation if needed, we can assume
$(a,b,c,d)=(\infty,0,1,t)$. In this case we say that the pair of paths is in the first standard form 
(c.f. Figure 1). It follows that also $t$ is a complete invariant of the quartet.

\begin{figure}
\[ 
\unitlength 1mm 
\linethickness{0.4pt}
\ifx\plotpoint\undefined\newsavebox{\plotpoint}\fi 
\fbox{\begin{picture}(38,18)(0,0)
\multiput(5,3)(.475,.75){11}{{\rule{.4pt}{.4pt}}}
\multiput(9.8,10.6)(.44444,-.75){10}{{\rule{.4pt}{.4pt}}}
\multiput(25,3)(.475,.75){11}{{\rule{.4pt}{.4pt}}}
\multiput(29.8,10.6)(.44444,-.75){10}{{\rule{.4pt}{.4pt}}}
\put(5,1){\makebox(0,0)[cc]{$a$}}
\put(14,1){\makebox(0,0)[cc]{$b$}}
\put(25,1){\makebox(0,0)[cc]{$c$}}
\put(34,1){\makebox(0,0)[cc]{$d$}}
\put(4,17){\makebox(0,0)[cc]{$A$}}
\end{picture}}
\qquad
\fbox{\begin{picture}(32,18)(0,0)
\multiput(5.68,3)(0,1){13}{{\rule{.4pt}{.4pt}}}
\multiput(15,3)(.475,.75){11}{{\rule{.4pt}{.4pt}}}
\multiput(19.8,10.6)(.44444,-.75){10}{{\rule{.4pt}{.4pt}}}
\put(5,1){\makebox(0,0)[cc]{$0$}}
\put(5,17){\makebox(0,0)[cc]{$\infty$}}
\put(15,1){\makebox(0,0)[cc]{$1$}}
\put(24,1){\makebox(0,0)[cc]{$t$}}
\put(24,17){\makebox(0,0)[cc]{$B$}}
\end{picture}}
\qquad
\fbox{\begin{picture}(18.75,18)(0,0)
\multiput(10.18,3)(-.5,.44){8}{{\rule{.4pt}{.4pt}}}
\multiput(6.68,6)(-.42857,-.43){8}{{\rule{.4pt}{.4pt}}}
\multiput(6.93,6)(.44444,.46){10}{{\rule{.4pt}{.4pt}}}
\multiput(10.93,10)(.51786,-.5){15}{{\rule{.4pt}{.4pt}}}
\multiput(11.18,10)(0,.618){8}{{\rule{.4pt}{.4pt}}}
\put(11.25,17){\makebox(0,0)[cc]{$\infty$}}
\put(3.25,1){\makebox(0,0)[cc]{$1$}}
\put(10.75,1){\makebox(0,0)[cc]{$0$}}
\put(18.75,1){\makebox(0,0)[cc]{$t$}}
\put(3,17){\makebox(0,0)[cc]{$C$}}
\end{picture}}
\]
\caption{Two non-intersecting paths (A), its first standard form (B), 
and one of the posible variation with intersecting paths (C).} 
\end{figure}
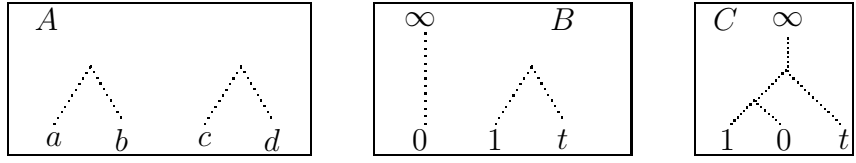

\begin{lemma}
Let $\lambda\in k$ be any element, and let $i$ and $j$ be linearly independent
pure quaternions satisfying $i^2=j^2=1$ and $ij+ji=2\lambda$. Assume $j=2\tau_{a,b}-1$ and
 $i=2\tau_{c,d}-1$. Then 
$$\lambda=\frac{t+1}{t-1},\qquad t=[a,b;c,d].$$
\end{lemma}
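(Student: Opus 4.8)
The plan is to reduce to the first standard form via a M\"obius transformation and then compute directly. By the hypotheses, $i$ and $j$ are linearly independent pure quaternions with $i^2=j^2=1$, so Lemma \ref{rep1} applies and the conjugacy class of the pair $(i,j)$ is completely determined by $\lambda$; likewise the orbit of the quartet $(a,b,c,d)$ under M\"obius transformations is determined by the cross-ratio $t=[a,b;c,d]$. Since the idempotent construction $\tau_{a,b}$ intertwines the M\"obius action with conjugation (as recalled above, $A\tau_{a,b}A^{-1}=\tau_{\mu(a),\mu(b)}$, hence conjugation sends $j=2\tau_{a,b}-1$ and $i=2\tau_{c,d}-1$ to $2\tau_{\mu(a),\mu(b)}-1$ and $2\tau_{\mu(c),\mu(d)}-1$), both sides of the desired identity are invariant under simultaneously moving $(a,b,c,d)$ by a M\"obius transformation: $\lambda$ because conjugation preserves $ij+ji\in k$, and the right-hand side because the cross-ratio is a M\"obius invariant. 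Therefore it suffices to verify the formula in one representative of each orbit.

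First I would apply the M\"obius transformation carrying $(a,b,c,d)$ to $(\infty,0,1,t)$, putting the pair of paths in the first standard form. Then $\tau_{a,b}=\tau_{\infty,0}=\sbmattrix 0001$, so $j=2\tau_{\infty,0}-1=\sbmattrix {-1}001$, and $\tau_{c,d}=\tau_{1,t}$, which by the explicit formula $\tau_{c,d}=\frac1{d-c}\bmattrix d{-cd}1{-c}$ equals $\frac1{t-1}\bmattrix t{-t}1{-1}$, so $i=2\tau_{1,t}-1=\frac1{t-1}\bmattrix {t+1}{-2t}2{-(t+1)}$. Now I would simply multiply out $ij$ and $ji$, add them, and read off the scalar. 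A quick check: $ij=\frac1{t-1}\bmattrix {-(t+1)}{-2t}{-2}{-(t+1)}$ and $ji=\frac1{t-1}\bmattrix {-(t+1)}{2t}2{-(t+1)}$, whose sum is $\frac{-2(t+1)}{t-1}I$, so $ij+ji=2\lambda$ gives $\lambda=\frac{-(t+1)}{t-1}=\frac{t+1}{1-t}$. This is the sign-reversed version of the claimed formula; I would reconcile this with the stated sign by checking the orientation convention in the definition of the cross-ratio $[a,b;c,d]$ used in the paper (equivalently, whether $\tau_{a,b}$ or $\tau_{b,a}=1-\tau_{a,b}$ is the relevant idempotent for $j$), and adjust so that the normalization matches $\lambda=\frac{t+1}{t-1}$ exactly.

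The main obstacle is thus not the algebra — the computation is a single explicit $2\times2$ multiplication — but keeping the conventions straight: the cross-ratio has several standard normalizations differing by $t\mapsto 1/t$, $t\mapsto 1-t$, etc., and the identification $i=2\tau_{c,d}-1$ versus $i=2\tau_{d,c}-1$ flips a sign, so I must pin down exactly which ordered pair $(c,d)$ corresponds to $i$ from the setup in \S\ref{s4} and which cross-ratio formula is in force. I also need to note that the formula makes sense only when $t\neq 1$, i.e.\ when the two paths are distinct, which is guaranteed by the linear independence of $i$ and $j$ (if $t=1$ then $\tau_{c,d}$ would share both endpoints with $\tau_{a,b}$, forcing $i=\pm j$). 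Once the conventions are fixed, inverting the relation $\lambda=\frac{t+1}{t-1}$ to get $t=\frac{\lambda+1}{\lambda-1}$ is immediate, completing the proof.
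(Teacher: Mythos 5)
Your core computation is correct, and your route is genuinely different from the paper's. The paper argues indirectly: since $\lambda$ and $t$ are both complete invariants of the quartet, $\lambda=\mu(t)$ for some M\"obius transformation $\mu$; transposing $a$ and $b$ negates $\lambda$ and inverts $t$, forcing $\mu(t)=\xi\frac{t-1}{t+1}$ or $\xi\frac{t+1}{t-1}$; the first is excluded because $t=-1$ must give a finite $\lambda$, and $\xi$ is fixed by one example. Your normalize-and-multiply argument is more elementary and avoids the ``invariant rational function of a complete invariant is a M\"obius function'' step, at the price of the convention bookkeeping you ran into. That sign discrepancy is real and is not an error in your algebra: the paper never defines $[a,b;c,d]$, and the formula as stated holds for the convention $[a,b;c,d]=\frac{(a-d)(b-c)}{(a-c)(b-d)}$ (the one under which the later manipulation leading to equation (\ref{e3}) is exact), for which $[\infty,0;1,t]=1/t$; so your representative $(\infty,0,1,t)$ has cross-ratio $1/t$ and your answer $\frac{t+1}{1-t}$ becomes the stated formula after $t\mapsto 1/t$. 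The paper's own ``first standard form'' sentence and its $t=\infty$ example carry the same wobble (as printed, $(2\tau_{\infty,0}-1)(2\tau_{1,\infty}-1)+(2\tau_{1,\infty}-1)(2\tau_{\infty,0}-1)=-2\sbmattrix 1001$). So ``check the convention'' is the right resolution, but you should actually carry it out rather than leave it open; the cleanest fix is a convention-free computation: from the explicit formula, $\mathrm{tr}(\tau_{a,b}\tau_{c,d})=\frac{(b-c)(d-a)}{(b-a)(d-c)}$, and since $i,j$ are traceless, $ij+ji=\mathrm{tr}(ij)I$, so $2\lambda=4\,\mathrm{tr}(\tau_{a,b}\tau_{c,d})-2$, which rearranges exactly to $\lambda=\frac{t+1}{t-1}$ with $t=\frac{(a-d)(b-c)}{(a-c)(b-d)}$, with no normalization needed.

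Two smaller points. Your reduction ``send $(a,b,c)$ to $(\infty,0,1)$'' tacitly assumes $a,b,c$ pairwise distinct, which fails precisely when the two paths share an end, i.e.\ $c\in\{a,b\}$; linear independence of $i$ and $j$ only rules out $\{a,b\}=\{c,d\}$, not a single common end, and the shared-end case is exactly $\lambda=\pm1$, $t\in\{0,\infty\}$, which the lemma covers (compare Lemma \ref{fd1}(3)). You must handle it, either by normalizing a different triple, by a continuity argument in $t$, or automatically via the trace identity above. Relatedly, your parenthetical about $t=1$ is off: $t=1$ would force $a=b$ or $c=d$ and so never occurs, independently of linear independence; coincident or shared endpoints correspond to $t\in\{0,\infty\}$, where the formula remains meaningful and gives $\lambda=\pm1$.
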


\begin{proof}

 Since $\lambda$ is certainly a rational function of  $(a,b,c,d)$, while both $t$ and $\lambda$ are
complete invariants of the quartet,
 it must be of the form $\lambda=\mu(t)$ where $\mu$ is a M\"obius transformation.
 Since transposing $a$ and $b$ has the effect of changing the sign of $i$, and therefore also the sign
of $\lambda$, we have $\mu(t^{-1})=-\mu(t)$.
 We conclude that either $\mu(t)= \xi\cdot\frac{t-1}{t+1}$,
or $\mu(t)= \xi\cdot\frac{t+1}{t-1}$. The first possibility is discarded out since $t=-1$ should 
correspond to a finite  value of $\lambda$. Now $\xi=1$ follows by choosing any particular example. 
For instance, the value $t=\infty$, corresponding to the quartet $(\infty,0,1,\infty)$, gives us
$\tau_{\infty,0}=\sbmattrix 0001$ and $\tau_{\infty,1}=\sbmattrix 0101$, whence 
$$ (2\tau_{\infty,0}-1)(2\tau_{1,\infty}-1)+(2\tau_{1,\infty}-1)(2\tau_{\infty,0}-1)=2\sbmattrix 1001.$$
\end{proof}

Applying the transformation $\mu$ in the preceeding proof to the coordinates of the 
quartet $(\infty,0,1,t)$, we get $(-1,1,\infty,\lambda)$, so that both paths are in one of the 
configurations shown in Figure 2, which we call the second invariant form.
 In the pictures, $|\pi^w|$ is the radius of the smallest ball 
containing $\lambda$ and either element in $\{1,-1\}$. Note that $\nu(\lambda+1)=\varepsilon=:\nu(2)$ in
 Figure 2A, while $w=\nu(\lambda-1)=\nu(\lambda+1)$ in Figure 2C. Next result is now 
apparent:

\begin{figure}
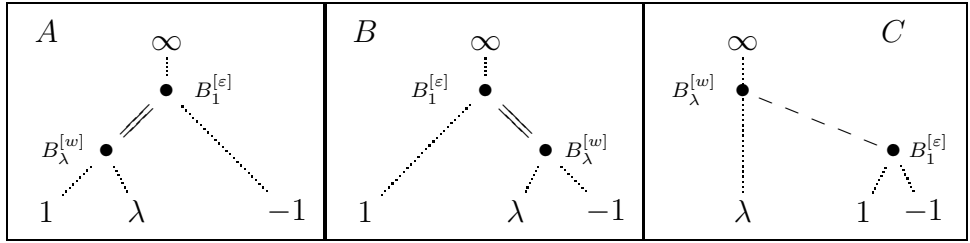

\[ 
\fbox{ \xygraph{
!{<0cm,0cm>;<.8cm,0cm>:<0cm,.8cm>::} 
 !{(0.5,0) }*+{1}="e2" !{(2.5,2.8) }*+{\infty}="e3"
!{(2,0) }*+{\lambda}="e4" !{(4.5,0) }*+{-1}="e5"
  !{(2.5,2) }*+{\bullet}="t3"   !{(1.5,1) }*+{\bullet}="t4" 
!{(3.3,2) }*+{{}^{B_1^{[\varepsilon]}}}="ti3"
!{(0.8,1) }*+{{}^{B_\lambda^{[w]}}}="ti4"
!{(0.5,3) }*+{A}="ti99"
   "e2"-@{.}"t4" "t3"-@{=}"t4" "e3"-@{.}"t3"
"t4"-@{.}"e4" "t3"-@{.}"e5"
 } }
\fbox{ \xygraph{
!{<0cm,0cm>;<.8cm,0cm>:<0cm,.8cm>::} 
 !{(4.5,0) }*+{-1}="e2" !{(2.5,2.8) }*+{\infty}="e3"
!{(3,0) }*+{\lambda}="e4" !{(0.5,0) }*+{1}="e5"
  !{(2.5,2) }*+{\bullet}="t3"   !{(3.5,1) }*+{\bullet}="t4" 
!{(1.6,2) }*+{{}^{B_1^{[\varepsilon]}}}="ti3"
!{(4.2,1) }*+{{}^{B_\lambda^{[w]}}}="ti4"
!{(0.5,3) }*+{B}="ti99"
   "e2"-@{.}"t4" "t3"-@{=}"t4" "e3"-@{.}"t3"
"t4"-@{.}"e4" "t3"-@{.}"e5"
 } }
\fbox{ \xygraph{
!{<0cm,0cm>;<.8cm,0cm>:<0cm,.8cm>::} 
 !{(1,0) }*+{\lambda}="e2" !{(1,2.8) }*+{\infty}="e3"
!{(3,0) }*+{1}="e4" !{(4,0) }*+{-1}="e5"
  !{(3.5,1) }*+{\bullet}="t3"   !{(1,2) }*+{\bullet}="t4" 
!{(4.1,1) }*+{{}^{B_1^{[\varepsilon]}}}="ti3"
!{(0.2,2) }*+{{}^{B_\lambda^{[w]}}}="ti4"
!{(3.5,3) }*+{C}="ti99"
   "e2"-@{.}"e3" "t3"-@{--}"t4"
"t3"-@{.}"e4" "t3"-@{.}"e5"
 } }
\]
\caption{Configuration of paths in the proof of Lemma 4.3.} 
\end{figure}

\begin{lemma}\label{fd1}
Let $\lambda\in K$ be any element, and let $i$ and $j$ be linearly independent
pure quaternions satisfying $i^2=j^2=1$ and $ij+ji=2\lambda$. Let 
$d_f=-\frac12\nu\left(\frac{\lambda^2-1}4\right)$,
 then the following holds:
\begin{enumerate}
\item If $\lambda\neq\pm1$ and $d_f\leq0$, then the stems
$\mathfrak{m}_k(i)$ and $\mathfrak{m}_k(j)$ intersect non-trivially, and the intersection
is a path of length $-2d_f$ (see Figures 2A-B).
\item If $\lambda\neq\pm1$ and $d_f>0$, then $d_f$ is the distance between the stems
$\mathfrak{m}_k(i)$ and $\mathfrak{m}_k(j)$
 (see Figure 2C).  
\item   If $\lambda=\pm1$, the intersection between the two stems is a ray, and $d_f=\infty$.
\end{enumerate}
\end{lemma}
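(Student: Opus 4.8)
The plan is to leverage the explicit configuration of the two paths established in Lemma 4.2 and the paragraph following it. By that lemma, after applying a Möbius transformation we may assume the quartet $(a,b,c,d)$ equals $(-1,1,\infty,\lambda)$, so that $\mathfrak{m}_k(j)=\mathfrak{p}(-1,1)$ and $\mathfrak{m}_k(i)=\mathfrak{p}(\infty,\lambda)$ (recall the stem of $S(q)$, for $q=2\tau_{x,y}-1$ a non-trivial square root of $1$, is exactly the path $\mathfrak{p}(x,y)$, since such $q$ is an idempotent order in the sense of \S1). The problem thus reduces to a purely geometric statement about the relative position of these two explicit paths in $\mathfrak{t}(k)$, which is governed entirely by the valuations $\nu(\lambda-1)$, $\nu(\lambda+1)$ and $\nu(2)$. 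First I would recall the standard dictionary between paths $\mathfrak{p}(x,y)$ joining two ends and the balls in $k$: the vertices on $\mathfrak{p}(x,y)$ correspond to the balls $B$ with $x\in B^c$ and $y\in B$ (or vice versa), equivalently the "interval" of radii between the two ends, and the distance between $\mathfrak{p}(x,y)$ and $\mathfrak{p}(x',y')$ is read off from how the four ends cluster together in $k$.

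Next I would split into the three cases exactly as in the statement, matching them to Figures 2A--C. When $\lambda\ne\pm1$, set $w_-=\nu(\lambda-1)$ and $w_+=\nu(\lambda+1)$; since $1$ and $-1$ differ by $2$, the ultrametric inequality forces $\min(w_-,w_+)\le\nu(2)=\varepsilon$ with equality of the two unless both exceed $\varepsilon$, but in fact one of them is always $\le\varepsilon$ and the other then equals $\nu(\lambda\mp1)$; one checks $w_-+w_+ = \nu(\lambda^2-1)$. The quantity $-2d_f = \nu(\lambda^2-1)-2\varepsilon = (w_-+w_+)-2\varepsilon$. In the configuration of Figures 2A--B (where $\lambda$ lies on the same side, i.e.\ $\lambda$ is $\varepsilon$-close to one of $\pm1$), the two paths $\mathfrak{p}(-1,1)$ and $\mathfrak{p}(\infty,\lambda)$ share a common segment: its endpoints are the vertex where $\mathfrak{p}(-1,1)$ "turns" — the ball of radius $|\pi^\varepsilon|$ around $1$ (or $-1$) — and the vertex where $\mathfrak{p}(\infty,\lambda)$ branches off toward $\lambda$, the ball of radius $|\pi^{w}|$ around $\lambda$, where $w=\max(w_-,w_+)$. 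Computing the tree-distance between these two vertices gives $w-\varepsilon$ on one branch plus $\varepsilon - (-\text{something})$... more carefully: the overlap length is $w_- + w_+ - 2\varepsilon = -2d_f$ when this is $\ge 0$, which is precisely case (1). In case (2), $d_f>0$ means $w_-+w_+<2\varepsilon$, which (since one of $w_\pm$ is $\le\varepsilon$) forces $w_-=w_+=\nu(\lambda-1)=\nu(\lambda+1)<\varepsilon$; geometrically $\lambda$ is "far" from both $\pm1$, the two paths are disjoint as in Figure 2C, and the gap between them is the segment from the median of $\{-1,1,\lambda\}$ toward the median of $\{\infty,\lambda,\pm1\}$, of length $\varepsilon - w = \varepsilon - \nu(\lambda-1)$; one checks this equals $d_f = \varepsilon - \tfrac12\nu(\lambda^2-1) = \varepsilon - \nu(\lambda-1)$.

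Finally, case (3): when $\lambda=\pm1$ we are in the degenerate situation of Lemma 4.1, $\mathcal{A}(\lambda)\not\cong\matrici_2(k)$, and from the proof of that lemma (or directly) the two idempotent paths $\mathfrak{p}(a,b)$ and $\mathfrak{p}(c,d)$ share a common \emph{end}; their intersection is therefore a ray (a one-sided infinite walk), and consistently $-2d_f = \nu(0) = -\infty$... i.e.\ $d_f=+\infty$ after the sign convention, with the convention that the "length" of the intersection is $+\infty$, matching $\min\{-2d_f,\dots\}$ being realized as an infinite ray. I expect the main obstacle to be bookkeeping the normalizations and signs: keeping straight that $p\le 0$, that $d_f$ can be negative or a half-integer, and that the "length of the intersection" versus "distance between stems" swap roles at $d_f=0$; the underlying tree geometry (medians of three points, overlap of two geodesics in a tree) is elementary, but I would want to verify the boundary case $d_f=0$ explicitly — there the two stems meet in a single vertex, consistent with both a length-$0$ intersection and distance $0$ — to make sure the two clauses agree. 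It is worth noting the remark already made in the excerpt, that $\nu(\lambda+1)=\varepsilon$ in Figure 2A and $w=\nu(\lambda-1)=\nu(\lambda+1)$ in Figure 2C, which pins down exactly which case each figure represents and short-circuits part of this casework.
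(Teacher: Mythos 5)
Your proposal is correct and follows essentially the same route as the paper: reduce by a M\"obius transformation (Lemma 4.2) to the second standard form $(-1,1,\infty,\lambda)$ and read off the overlap length $w_-+w_+-2\varepsilon=-2d_f$, respectively the gap $\varepsilon-w=d_f$, from the ultrametric configuration of $\{1,-1,\lambda\}$ as in Figure 2, with the shared-end case $\lambda=\pm1$ handled via Lemma 4.1. The paper leaves exactly this verification implicit (``Next result is now apparent''), and your valuation bookkeeping supplies it correctly.
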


\section{Galois action on ghost branches}

In this section we let $\tau=\tau_{z,u}$ be an idempotent in the algebra $\matrici_2(L)$ generating an
algebra $L[\tau]=L[i]$, where $i\in \matrici_2(k)\subseteq\matrici_2(L)$ is a pure quaternion
satisfying $k[i]\cong L$. Certainly $L/k$ is a quadratic extension. Since the algebra  $k[i]$
has no non-trivial idempotents, necessarily $\tau_{z,u}\notin \matrici_2(k)$, and therefore
$z$ and $u$ cannot be both in $k$. On the other hand, since $L[i]$ is obtained from
$k[i]$ by extension of scalars, and $\tau_{z,u}$ and $\tau_{u,z}$ are the only two idempotents
in this algebra, the non-trivial element $\sigma$ in the Galois group $\mathrm{Gal}(L/k)=\langle\sigma\rangle$
 necessarily permutes these two idempotents,
and therefore also $z$ and $u=\sigma(z)$. If $i^2=\alpha$, we can write $L=k[\sqrt{\alpha}]$, while
$z=a+b\sqrt{\alpha}$ and $u=\bar{z}=a-b\sqrt{\alpha}$. This can be used to easily recover the
description for branches of pure quaternions given in \cite[Lemma 3.4]{osbotbtt}.  

 Let $\xi\in k$
be chosen so that its distance to $z$ is minimal, as in Figure 3A. Then $\xi$ must be equidistant from the
extremes $z$ and $\bar{z}$. In fact, $z$ and $\bar{z}$ are   equidistant from every element in $k$,
so minimizing $|z-\xi|$ is equivalent to minimizing $|(z-\xi)(\bar{z}-\xi)|=|(a-\xi)^2-\alpha b^2|$.
This is achieved by the element 
$\xi=a+b\delta$, where $\delta\in k$ is the element minimizing $|\delta^2-\alpha|$.
In particular, the ideal $(\delta^2-\alpha)$ is the quadratic defect of $\alpha$.
 The path joining $\xi$ and $\infty$ is called a $k$-vine for  $\tau$. The normalized tree-distance from
the path $\mathfrak{s}_L(\tau)$ to the $k$-vine is 
$$\nu(\bar{z}-z)-\nu(\xi-z)=\nu(2b\sqrt\alpha)-\nu\Big(b(\delta-\sqrt\alpha)\Big)$$
$$=
-\nu\left(\frac{\delta-\sqrt\alpha}{2\sqrt\alpha}\right)=
-\frac12\nu\left(\frac{\delta^2-\alpha}{4\alpha}\right).$$

\begin{figure}
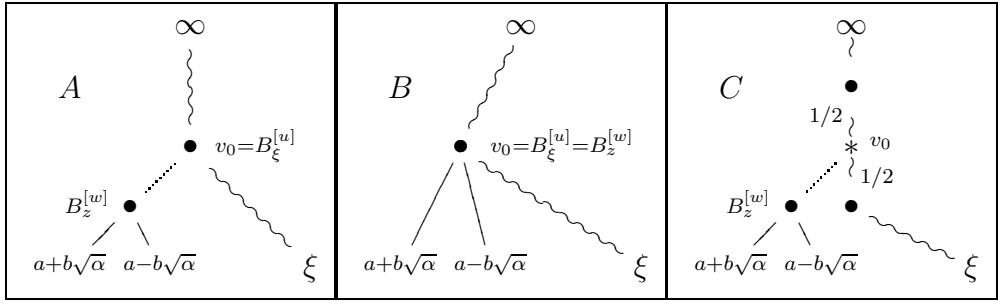

\[ 
\fbox{ \xygraph{
!{<0cm,0cm>;<.8cm,0cm>:<0cm,.8cm>::} 
 !{(0.5,0) }*+{{}^{a+b\sqrt\alpha}}="e2" !{(2.5,4) }*+{\infty}="e3"
!{(2,0) }*+{{}^{a-b\sqrt\alpha}}="e4" !{(4.5,0) }*+{\xi}="e5"
  !{(2.5,2) }*+{\bullet}="t3"   !{(1.5,1) }*+{\bullet}="t4" 
!{(3.6,2) }*+{{}^{v_0=B_\xi^{[u]}}}="ti3"
!{(0.8,1) }*+{{}^{B_z^{[w]}}}="ti4"
!{(0.5,3) }*+{A}="ti99"
   "e2"-"t4" "t3"-@{.}"t4" "e3"-@{~}"t3"
"t4"-"e4" "t3"-@{~}"e5"
 } }
\fbox{ \xygraph{
!{<0cm,0cm>;<.8cm,0cm>:<0cm,.8cm>::} 
 !{(0.5,0) }*+{{}^{a+b\sqrt\alpha}}="e2" !{(2.5,4) }*+{\infty}="e3"
!{(2,0) }*+{{}^{a-b\sqrt\alpha}}="e4" !{(4.5,0) }*+{\xi}="e5"
  !{(1.5,2) }*+{\bullet}="t3"   !{(1.5,2) }*+{\bullet}="t4" 
!{(3.2,2) }*+{{}^{v_0=B_\xi^{[u]}=B_z^{[w]}}}="ti3"
!{(0.5,3) }*+{B}="ti99"
   "e2"-"t4" "t3"-@{.}"t4" "e3"-@{~}"t3"
"t4"-"e4" "t3"-@{~}"e5"
 } }
\fbox{ \xygraph{
!{<0cm,0cm>;<.8cm,0cm>:<0cm,.8cm>::} 
 !{(0.5,0) }*+{{}^{a+b\sqrt\alpha}}="e2" !{(2.5,4) }*+{\infty}="e3"
!{(2,0) }*+{{}^{a-b\sqrt\alpha}}="e4" !{(4.5,0) }*+{\xi}="e5"
  !{(2.5,2) }*+{*}="t3"    !{(2.5,3) }*+{\bullet}="t6"    !{(2.5,1) }*+{\bullet}="t7" 
  !{(1.5,1) }*+{\bullet}="t4" 
!{(3,2) }*+{{}^{v_0}}="ti3"
!{(0.8,1) }*+{{}^{B_z^{[w]}}}="ti4"
!{(0.5,3) }*+{C}="ti99"
   "e2"-"t4" "t3"-@{.}"t4" "e3"-@{~}"t6"
"t4"-"e4" "t7"-@{~}"e5" "t3"-@{~}^{1/2}"t7" "t3"-@{~}^{1/2}"t6"}}
\]
\caption{The $k$-vine of a ghost path. Here $w=v_L(2b\sqrt\alpha)$ and $(b^{-1}\pi^{2u})=
\qdef(\alpha)$ is the quadratic defect,
note that distances are normalized. } 
\end{figure}

Recall that the depth of the branch $\mathfrak{s}_L(i) \subseteq\mathfrak{t}(L)$ is
$p=\nu(2\sqrt\alpha)$ \cite[Lemma 3.4]{osbotbtt}. Note that using the normalized distance
make no difference in this formula.
 We conclude that the vertex in the $k$-vine
that is closest to the stem of the branch $\mathfrak{s}_L(i)$ has depth 
$p=\frac12\nu\left(\delta^2-\alpha\right)$ in that branch. 
Replacing $i$ by $\pi^{-n}i$ if needed, we can always assume that $\alpha\in Q$, as defined in Theorem \ref{t21}.  
\begin{itemize}
\item  If the extension $L/k$ is unramified,  we have that $\alpha=\Delta\in Q$ is a unit of minimal
quadratic defect. The vertex $v_0$ in the $k$-vine
that is closest to the stem $\mathfrak{m}_L(i)$ has depth 
$p=-\nu(2)$. We conclude that $v_0$ in Figure 3B
is indeed a vertex of $\mathfrak{t}(k)$, and therefore it is the stem of $\mathfrak{s}_k(i)$. 
The depth  of this branch, in this case, equals $p$.
 \item If the extension $L/k$ is ramified,  the vertex $v_0$ in the $k$-vine
that is closest to the stem $\mathfrak{m}_L(i)$ has depth 
$p=-\nu\left(\delta^2-\alpha\right)$ in that branch, which is an odd number,
as it is the valuation of the quadratic defect \cite[63.2]{Om}. We conclude that $v_0$
is a ghost vertex, and therefore the midpoint of the stem $\mathfrak{m}_k(i)$ (see Figure 3C).
As the normalized distance in $\mathfrak{t}(L)$ of neighboring vertices is $1/2$,
the depth  of this branch is, therefore, $\frac{p-1}2$.
\end{itemize}

Now assume we have two pure quaternions $i$ and $j$ en $\matrici_2(k)$ satisfying the relations
\begin{equation}\label{alfabeta}
i^2=\alpha,\qquad j^2=\beta,\qquad ij+ji=2\lambda.
\end{equation}
Then, the elements $i_0=\frac i{\sqrt\alpha}$, and $j_0=\frac j{\sqrt{\beta}}$ in $\matrici_2(L)$,
where $L=K(\sqrt\alpha,\sqrt\beta)$, satisfy the relations
$$i_0^2=j_0^2=1,\qquad i_0j_0+j_0i_0=2\lambda'=\frac{2\lambda}{\sqrt{\alpha\beta}},$$
so that the results in last section apply to them.
In this general setting, we have a classification of pairs of pure quaternions satisfying the relations
in (\ref{alfabeta}) that is entirely analog to the one in \S\ref{s4}, namely:

\begin{lemma}\label{rep2}
Let $\alpha,\beta,\lambda\in k$, with $\alpha\beta\neq0$, and let
$\mathcal{A}=\mathcal{A}(\alpha,\beta,\lambda)$ be the algebra defined, in terms of generators and relations, by
$$\mathcal{A}=k\Big[i,j\Big|i^2=\alpha, j^2=\beta, ij+ji=2\lambda\Big].$$
Then there is, up to conjugation, at most one
representation $\phi:\mathcal{A}\rightarrow \matrici_2(k)$, for which
$\phi(i)$ and $\phi(j)$ are linearly independent. Furthermore $\phi$, if it exists, is an isomorphism
unless $\lambda^2=\pm\alpha\beta$. In the latter case $\alpha$ and $\beta$ are squares.
\end{lemma}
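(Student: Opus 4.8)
The plan is to reduce to Lemma \ref{rep1} by rescaling the generators, mirroring exactly the passage to $i_0=i/\sqrt\alpha$, $j_0=j/\sqrt\beta$ carried out just before the statement. First I would treat the generic case $\lambda^2\neq\alpha\beta$ with $\lambda^2\neq-\alpha\beta$. Working in $L=k(\sqrt\alpha,\sqrt\beta)$, set $i_0=i/\sqrt\alpha$ and $j_0=j/\sqrt\beta$, so that $i_0^2=j_0^2=1$ and $i_0j_0+j_0i_0=2\lambda'$ with $\lambda'=\lambda/\sqrt{\alpha\beta}$. The condition $\lambda^2\neq\pm\alpha\beta$ translates to $\lambda'\neq\pm1$, so by Lemma \ref{rep1} the algebra $\mathcal A(\lambda')$ over $L$ is $\matrici_2(L)$, which is four-dimensional; hence $\mathcal A\otimes_k L$ is four-dimensional over $L$, so $\mathcal A$ is four-dimensional over $k$, and $i$ and $j'=\lambda i-\frac{\alpha}{\beta}j$ (the rescaled analogue of the standard-generator trick) satisfy $ij'=-j'i$ and $(j')^2=\frac{\lambda^2\beta-\alpha^2/\beta\cdot\beta}{\cdots}$, more cleanly: after rescaling, $i_0$ and $\lambda' i_0-j_0$ are standard quaternion generators with $(\lambda'i_0-j_0)^2=1-\lambda'^2\neq0$. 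Thus $\mathcal A$ is a quaternion algebra over $k$; as in Lemma \ref{rep1}, since $k[i]=k[\sqrt\alpha]$ already contains a square root of $\alpha$, $\mathcal A$ is not division and hence $\mathcal A\cong\matrici_2(k)$. Skolem--Noether then gives uniqueness up to conjugation of the representation, which must be an isomorphism.

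Next I would handle $\lambda^2=\alpha\beta$ (so $\lambda'=1$) or $\lambda^2=-\alpha\beta$ (so $\lambda'=-1$). Here the existence of a faithful representation into $\matrici_2(k)$ is no longer automatic, so the statement only claims \emph{at most one} such $\phi$ with $\phi(i),\phi(j)$ independent, and that any such $\phi$ forces $\alpha,\beta\in k^{*2}$. Suppose $\phi$ exists. Then $\phi(i)^2=\alpha\cdot 1$ and $\phi(j)^2=\beta\cdot 1$ in $\matrici_2(k)$, and $\phi(i)\phi(j)+\phi(j)\phi(i)=2\lambda$. The key point is that $\lambda^2=\pm\alpha\beta$ makes the element $\phi(i)\phi(j)$ (or $\phi(i)\phi(j)\mp\lambda$) nilpotent: one computes $\big(\phi(i)\phi(j)\big)^2-2\lambda\,\phi(i)\phi(j)+\alpha\beta=0$ from the anticommutation-plus-$2\lambda$ relation, a quadratic whose discriminant $4\lambda^2-4\alpha\beta$ vanishes, so $\phi(i)\phi(j)-\lambda$ squares to zero (the $-$ sign case is analogous after replacing $j$ by a suitable multiple, or directly with $\lambda^2=-\alpha\beta$). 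A nonzero nilpotent matrix has both eigenvalues $0$, so its trace and determinant vanish; extracting $\det\phi(i)\det\phi(j)=\alpha\beta$ is consistent, but the nilpotence of $\phi(i)\phi(j)-\lambda$ together with $\phi(i)^2=\alpha$ forces $\phi(i)$ to have a rational eigenvalue, i.e. $\alpha$ is a square in $k$, and symmetrically $\beta$ is a square. Once $\alpha=a_0^2$, $\beta=b_0^2$ with $a_0,b_0\in k^*$, set $i_0=\phi(i)/a_0$, $j_0=\phi(j)/b_0\in\matrici_2(k)$: these satisfy $i_0^2=j_0^2=1$ and $i_0j_0+j_0i_0=2\lambda/(a_0b_0)=\pm2$, and they are linearly independent since $\phi(i),\phi(j)$ are. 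Now Lemma \ref{rep1} applies over $k$ and yields a unique conjugacy class for the pair $(i_0,j_0)$, hence for $(\phi(i),\phi(j))$, proving the at-most-one claim.

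The main obstacle I anticipate is the degenerate case bookkeeping: showing cleanly that the existence of $\phi$ forces $\alpha,\beta\in k^{*2}$ without circular reasoning, since a priori $\phi$ need not be an isomorphism and $k[i]$ could be a field. The cleanest route is the nilpotence argument above — derive that $\phi(i)\phi(j)\mp\lambda$ is a nonzero nilpotent (nonzero because $\phi(i),\phi(j)$ are independent, so $\phi(i)\phi(j)$ is not a scalar), then use that $\phi(i)$ commutes with no… rather, that the $2\times 2$ matrix $\phi(i)$ satisfies $X^2=\alpha$ and shares an eigenvector with a nonzero nilpotent $N$ (namely $N=\phi(i)\phi(j)-\lambda$ has $\ker N\neq 0$ and is $\phi(i)$-stable up to the relation), forcing $\phi(i)$ to act as a scalar $\pm a_0$ on that line with $a_0^2=\alpha$, whence $\alpha\in k^{*2}$; the same for $\beta$. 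One should double-check the two sign cases $\lambda^2=\alpha\beta$ versus $\lambda^2=-\alpha\beta$ reduce to $\lambda'=1$ and $\lambda'=-1$ respectively, which is immediate from $\lambda'=\lambda/\sqrt{\alpha\beta}$ once the square roots are in $k$. Everything else is a routine transcription of the arguments in Lemma \ref{rep1}.
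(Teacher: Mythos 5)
Your case split is where the proof goes wrong. Writing $\lambda'=\lambda/\sqrt{\alpha\beta}$, the condition $\lambda^2=-\alpha\beta$ gives $\lambda'^2=-1$, \emph{not} $\lambda'=-1$; so this case still satisfies $\lambda'\neq\pm1$ and belongs to the generic branch, which is exactly how the paper treats it (its degenerate case is only $\lambda^2=\alpha\beta$). Your attempt to handle $\lambda^2=-\alpha\beta$ as degenerate cannot be repaired: rescaling $j\mapsto cj$ sends $(\beta,\lambda)$ to $(c^2\beta,c\lambda)$ and leaves $\lambda^2/(\alpha\beta)$ unchanged, so the minus case never reduces to the plus case; moreover $\bigl(\phi(i)\phi(j)-\lambda\bigr)^2=\lambda^2-\alpha\beta=2\lambda^2\neq0$ there, so there is no nilpotent to exploit; and the conclusion you aim for is simply false. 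For instance, over $k=\mathbb{Q}_p$ with $p$ odd, take $\alpha=u$ a nonsquare unit, $\beta=-u$, $\lambda=u$: then $i$ and $i-j$ anticommute and $(i-j)^2=-2u$, so $\mathcal{A}\cong(u,-2u)$, which is split because the unit $-2u$ is a norm from the unramified extension $k(\sqrt u)$; hence $\phi$ exists with independent images although $\alpha$ is not a square. In this case $\mathcal{A}$ is a quaternion algebra, so any $\phi$ is automatically an isomorphism, and the sentence ``in the latter case $\alpha$ and $\beta$ are squares'' is only about the genuinely degenerate situation $\lambda^2=\alpha\beta$.

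There is a second error in your generic case: the deduction ``$k[i]=k[\sqrt\alpha]$ already contains a square root of $\alpha$, so $\mathcal{A}$ is not division, hence $\mathcal{A}\cong\matrici_2(k)$'' is unjustified and generally false. The zero-divisor trick in Lemma \ref{rep1} uses $i^2=1$ with $i\neq\pm1$, so that $k[i]\cong k\times k$; here, when $\alpha$ is a nonsquare, $k[i]$ is a field and $\mathcal{A}$ may well be the division quaternion algebra (e.g.\ $\lambda=0$ with $(\alpha,\beta)$ nonsplit) --- deciding splitness is precisely the content of the next lemma, and is why the statement only claims \emph{at most one} representation. Fortunately the conclusions actually asserted (uniqueness up to conjugation, and that $\phi$, if it exists, is an isomorphism) follow from $\mathcal{A}$ being four-dimensional central simple together with Skolem--Noether, so that sentence should just be deleted. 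By contrast, your treatment of the true degenerate case $\lambda^2=\alpha\beta$ is sound: $N=\phi(i)\phi(j)-\lambda$ is a nonzero nilpotent and a one-line computation from the relations shows it anticommutes with both $\phi(i)$ and $\phi(j)$, so its kernel line is stable under both, forcing eigenvalues in $k$ and hence $\alpha,\beta\in k^{*2}$, after which Lemma \ref{rep1} gives uniqueness. This is a legitimate alternative to the paper's argument, which normalizes $\beta=\alpha=\pm\lambda$, fixes $\phi(i)$ via the uniqueness of the two-dimensional representation of $k[i]$, and shows by direct matrix computation that a nonsquare $\alpha$ would force $\phi(j)=\pm\phi(i)$, contradicting independence.
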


\begin{proof}
If $\lambda\neq\pm\sqrt{\alpha\beta}$, it follows from Lemma \ref{rep1} that
 $L\otimes_k\mathcal{A}$, for $L$ as above, 
is a quaternion algebra. We conclude that also $\mathcal{A}$
is a quaternion algebra and the result follows from Skolem-Noether Theorem as
before. We assume thus $\lambda^2=\alpha\beta$, so $\beta/\alpha=(\lambda/\alpha)^2$.
 Replacing $j$ by $\frac {\alpha j}\lambda$ if needed, we can assume $\beta=\alpha=\pm\lambda$.
If this common value is a square, we are in the case of Lemma \ref{rep1}, so we assume this is not the case.
Then any $2$-dimensional representation of $\mathcal{A}$ restrict to a two dimensional
representation of $k[i]$ which is unique up to conjugation, so we may assume $\phi(i)=\sbmattrix 0\alpha10$.
Now we claim that the only matrices $X$ satisfying $X\phi(i)+\phi(i)X=\pm 2\alpha$ and 
$X^2=\alpha$ are $X=\pm\phi(i)$. In fact, the first condition implies $X=\sbmattrix x{\alpha(\pm2-z)}z{-x}$,
while the second gives $x^2=\alpha(-z\pm1)^2$, which implies $x=-z\pm1=0$, since $\alpha$ is not a square.
 The result follows.
\end{proof}

Now we characterize exactly when, given non-zero elements $\alpha,\beta\in k$, there exists two linearly
independent pure quaternions in $\matrici_2(k)$ satisfying the relations in (\ref{alfabeta}).  
Note that we can assume $\lambda^2\neq\alpha\beta$ by Lemma \ref{rep2}, since otherwise,
replacing $i$ and $j$ by suitable multiples if needed, we can assume that  $\alpha=\beta=1$, and in this case
the representation exists by  Lemma \ref{rep1}. In case $\lambda^2\neq\alpha\beta$, quaternions
satisfying   (\ref{alfabeta}) exist precisely when $\mathcal{A}(\alpha,\beta,\lambda)\cong
\mathbb{M}_2(k)$. When either $\alpha$ or $\beta$ is a square,
the algebra $\mathcal{A}(\alpha,\beta,\lambda)$ contains a non-trivial idempotent,
either $\frac{i+\sqrt{\alpha}}{2\sqrt{\alpha}}$ or $\frac{j+\sqrt{\beta}}{2\sqrt{\beta}}$,
and therefore it is a matrix algebra. In the remaining case we apply next result:

\begin{lemma}\label{rep3}
Let $\alpha,\beta,\lambda\in k$, with $\lambda^2\neq\alpha\beta\neq0$.
Assume neither $\alpha$ nor $\beta$ is a square. Then $\mathcal{A}(\alpha,\beta,\lambda)$, defined as
above, is a matrix algebra, if and only if there exists elements $a,b,c,d\in k$ with $bd\neq0$,
 satisfying the relation 
\begin{equation}\label{e3}
\lambda=\frac{b^2\alpha+d^2\beta-(a-c)^2}{2bd}.
\end{equation}
\end{lemma}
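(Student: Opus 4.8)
The plan is to reduce the statement to a question about whether a ternary quadratic form over $k$ represents zero nontrivially, and then to recognize that form as the norm form of $\mathcal{A}(\alpha,\beta,\lambda)$ so that the Hasse criterion (matrix algebra $\iff$ the form is isotropic) gives the equivalence. Concretely, by Lemma \ref{rep2} and the remarks preceding the statement, $\mathcal{A}=\mathcal{A}(\alpha,\beta,\lambda)$ is a quaternion algebra (since $\lambda^2\neq\alpha\beta$), and it is a matrix algebra precisely when it is not a division algebra, i.e. when its norm form has a nontrivial zero. So the first step is to write down this norm form explicitly in the basis $1,i,j,ij$. Using $i^2=\alpha$, $j^2=\beta$, $ij+ji=2\lambda$, one computes the reduced trace and reduced norm; the pure part has Gram matrix built from $\langle i,i\rangle=\alpha$, $\langle j,j\rangle=\beta$, $\langle i,j\rangle=\lambda$ (with the bilinear form $\langle x,y\rangle=\frac12(x\bar y+y\bar x)$), and $\langle ij,ij\rangle=\lambda^2-\alpha\beta$, with $ij$ orthogonal to $i$ and $j$ up to the $\lambda$-correction. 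After diagonalizing, the norm form restricted to pure quaternions is equivalent to $\langle \alpha,\beta,\alpha\beta-\lambda^2\rangle$ (up to scaling the last entry by a square), or equivalently $\mathcal A$ is split iff $\alpha x^2+\beta y^2-(\lambda^2-\alpha\beta)z^2$ is isotropic.

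The second step is to massage the isotropy condition into the exact shape \eqref{e3}. Suppose first that \eqref{e3} holds for some $a,b,c,d\in k$ with $bd\neq 0$; then the element $q=b i + d j + (a-c)$ — or a close variant, exactly the element whose existence one reads off from \eqref{e3} — should turn out to be a zero divisor, equivalently its reduced norm vanishes. I would verify $N(q)=0$ by direct substitution: expanding $N\big((a-c)+bi+dj\big)$ using the multiplication rules gives $(a-c)^2-b^2\alpha-d^2\beta-2bd\lambda\cdot(\text{something})$, and \eqref{e3} is precisely the vanishing of that expression (the factor $2bd$ in the denominator of \eqref{e3} is the tell-tale sign that the cross term $bd(ij+ji)=2bd\lambda$ is being cancelled). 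Conversely, if $\mathcal A$ is split, it contains a nonzero element of reduced norm zero; after subtracting its trace-part (a scalar) and checking that the $ij$-coefficient can be eliminated — here one uses that neither $\alpha$ nor $\beta$ is a square, so certain degenerate configurations are excluded — one may assume the zero-norm element has the form $(a-c)+bi+dj$, and then $bd\neq 0$ follows because if $b=0$ (resp. $d=0$) the norm-zero condition would force $\beta$ (resp. $\alpha$) to be a square, a contradiction. Writing out $N=0$ for this element yields \eqref{e3}.

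The step I expect to be the main obstacle is the reduction showing that a norm-zero element can be taken in the special form $(a-c)+bi+dj$ with $bd\neq 0$, i.e. ruling out the cases where the natural candidate degenerates. A general norm-zero element is $x_0+x_1 i + x_2 j + x_3 ij$ with all four coordinates possibly nonzero; one must argue that the $ij$-coordinate can be absorbed. The clean way is to note that $ij$ itself satisfies $(ij)^2=\alpha\beta+$ (cross terms), and that $\{1,i,j\}$ already spans a $3$-dimensional space containing the whole pure-quaternion trace-zero subspace modulo the $\lambda$-twist; alternatively, complete $\{i,j\}$ to an orthogonal-type basis and observe the third pure generator can be taken inside $k\cdot i+k\cdot j+k$ after a change of coordinates, since $\langle i,j\rangle=\lambda\neq 0$ forces the plane $ki+kj$ to be nondegenerate (its discriminant is $\alpha\beta-\lambda^2\neq 0$). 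Once isotropy is confined to this nondegenerate plane plus the scalar line, the element has the stated form automatically, and the non-square hypotheses on $\alpha$ and $\beta$ exactly guarantee $bd\neq 0$. I would also double-check the edge interaction with Lemma \ref{rep2}: the hypothesis $\lambda^2\neq\alpha\beta$ is what keeps $\mathcal A$ a genuine quaternion algebra so that ``matrix algebra'' and ``isotropic norm form'' are equivalent, and the hypothesis that $\alpha,\beta$ are non-squares is what makes the two ``obvious'' idempotents $\frac{i+\sqrt\alpha}{2\sqrt\alpha}$, $\frac{j+\sqrt\beta}{2\sqrt\beta}$ unavailable over $k$, so that \eqref{e3} is the only remaining source of splitting.
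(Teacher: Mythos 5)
Your proof is correct in substance, but it follows a genuinely different route from the paper's. The paper stays inside the geometric framework of Sections 4--5: it writes the would-be generators over $L=k(\sqrt\alpha,\sqrt\beta)$ as $i=\sqrt\alpha(2\tau_{z_1,z_2}-1)$ and $j=\sqrt\beta(2\tau_{z_3,z_4}-1)$ with Galois-conjugate ends $z_{1,2}=a\pm b\sqrt\alpha$, $z_{3,4}=c\pm d\sqrt\beta$, derives (\ref{e3}) from the cross-ratio identity $\lambda/\sqrt{\alpha\beta}=(t+1)/(t-1)$ of Lemma 4.2 by direct manipulation, and gets the converse by Galois descent: the paths determined by such $a,b,c,d$ are Galois-stable, so the corresponding idempotents, hence $i$ and $j$, are defined over $k$. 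You instead invoke the classical quadratic-form criterion: by Lemma \ref{rep2} the algebra $\mathcal{A}(\alpha,\beta,\lambda)$ is a quaternion algebra (since $\lambda^2\neq\alpha\beta$), and it is split exactly when the norm form restricted to the span of $1,i,j$, namely $x_0^2-\alpha x_1^2-\beta x_2^2-2\lambda x_1x_2$, is isotropic; the hypothesis that neither $\alpha$ nor $\beta$ is a square rules out isotropic vectors with $x_1x_2=0$, and isotropy with $x_1x_2\neq0$ is precisely (\ref{e3}). That argument works, with three small repairs: the norm-zero element attached to (\ref{e3}) is $(a-c)+bi-dj$ rather than $(a-c)+bi+dj$ (a sign absorbed by replacing $b$ with $-b$, which leaves (\ref{e3}) unchanged); the nondegeneracy of the plane $ki+kj$ must be argued from $\alpha\beta-\lambda^2\neq0$, not from $\langle i,j\rangle=\lambda\neq0$, since $\lambda=0$ is allowed; and the cleanest way to see that isotropy may be tested on the span of $1,i,j$ is to pass to the basis $1,\,i,\,j-(\lambda/\alpha)i$, in which this subspace carries the diagonal form $\langle 1,-\alpha,(\lambda^2-\alpha\beta)/\alpha\rangle$ and the standard ternary splitting criterion (or a Witt-index intersection argument) applies, rather than the somewhat vague elimination of the $ij$-coefficient you sketch. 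What each approach buys: yours is more elementary and self-contained, needing no tree or Galois-action machinery; the paper's computation, by contrast, identifies $a,b,c,d$ concretely as the coordinates of the fixed ends of the two ghost stems, and it is exactly this geometric interpretation (location of stems and $k$-vines) that is exploited later in the proof of Theorem \ref{t21}, so the paper's proof delivers more than the bare splitting criterion.
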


\begin{proof}
We can assume that $i=\sqrt\alpha(2\tau_{z_1,z_2}-1)$ and 
$j=\sqrt\beta(2\tau_{z_3,z_4}-1)$, where
$$(z_1,z_2,z_3,z_4)=
(a+b\sqrt\alpha,a-b\sqrt\alpha,c+d\sqrt\beta,c-d\sqrt\beta).$$
Then, the formulas in the preceding section
give 
\begin{equation}\label{eq2}
\frac{2\lambda}{\sqrt{\alpha\beta}}=
\frac{i}{\sqrt\alpha}\frac{j}{\sqrt\beta}+\frac{j}{\sqrt\beta}\frac{i}{\sqrt\alpha}=
2\frac{t+1}{t-1},\end{equation}
where $t=[z_1,z_2;z_3,z_4]$,
which under a little algebraic manipulation becomes
$$\frac{\lambda}{\sqrt{\alpha\beta}}
=\frac{(z_1-z_4)(z_2-z_3)+(z_1-z_3)(z_2-z_4)}{(z_1-z_4)(z_2-z_3)-(z_1-z_3)(z_2-z_4)}=$$
$$-\frac{2(z_1z_2+z_3z_4)-(z_1+z_2)(z_3+z_4)}{(z_1-z_2)(z_3-z_4)}=
-\frac{2(a^2-b^2\alpha)+2(c^2-d^2\beta)-4ac}{4bd\sqrt{\alpha\beta}}.$$
Conversely, if elements $a,b,c,d$ satisfying (\ref{e3}) exist, the above formulas define two paths that
are invariant under the Galois group, so they actually correspond to matrices $i$ and $j$ with coeficients
in $k$.
\end{proof}

Note that, when $\lambda=0$, this reduces to the well known criterion stating that
the quaternion algebra $\left(\frac{\alpha,\beta}k\right)$ splits if and only if
 the quadratic form in the numerator is isotropic.

\begin{lemma}\label{rep3}
Let $L/k$ be a multiple quadratic extension of non-archimedean local fields with Galois group 
$G=\mathrm{Gal}(L/k)$, and let $(x_1,x_2,x_3),
(x'_1,x'_2,x'_3)\in L^3$ be two triples satisfying the following conditions
\begin{enumerate}
\item Each set $\{x_1,x_2,x_3\}$ and $\{x'_1,x'_2,x'_3\}$ is $G$ invariant.
\item For any $\sigma\in G$ and any $i,j\in\{1,2,3\}$ we have $\sigma(x_i)=x_j$ if and only if
$\sigma(x'_i)=x'_j$.
\end{enumerate}  
Then the M\"obius transformation $\tau$ satisfying $\tau(x_i)=x'_i$ has coeficients in $k$. 
\end{lemma}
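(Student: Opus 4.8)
The plan is to regard $\tau$ as an element of $\mathrm{PGL}_2(L)$, show it is fixed by the coefficientwise action of $G=\mathrm{Gal}(L/k)$, and then descend it to $k$ by Hilbert's Theorem~90. Throughout I use that the hypothesis applied to $\sigma=\mathrm{id}$ forces $\{x_1,x_2,x_3\}$ and $\{x_1',x_2',x_3'\}$ to have the same pattern of coincidences; in particular the $x_i$ are pairwise distinct exactly when the $x_i'$ are, which is the situation in which there is a unique M\"obius transformation $\tau$ with $\tau(x_i)=x_i'$, so I work under that assumption.

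First I would recall that for $\sigma\in G$ the transformed map $\sigma(\tau)$ (obtained by applying $\sigma$ to the coefficients of any matrix representing $\tau$) again lies in $\mathrm{PGL}_2(L)$ and satisfies $\sigma(\tau)(y)=\sigma\bigl(\tau(\sigma^{-1}(y))\bigr)$ for all $y\in\mathbb{P}^1(L)$, since $\sigma$ is a field automorphism. A M\"obius transformation is determined by its values at three distinct points, so to get $\sigma(\tau)=\tau$ it suffices to check $\sigma(\tau)(x_i)=x_i'$ for $i=1,2,3$. Fix $\sigma$ and $i$. By condition (1), $\sigma^{-1}(x_i)=x_j$ for some index $j$, i.e. $\sigma(x_j)=x_i$; hence $\sigma(\tau)(x_i)=\sigma\bigl(\tau(x_j)\bigr)=\sigma(x_j')$. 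Now condition (2), applied to the pair of indices $(j,i)$ and to $\sigma$, turns $\sigma(x_j)=x_i$ into $\sigma(x_j')=x_i'$. Therefore $\sigma(\tau)(x_i)=x_i'=\tau(x_i)$ for each $i$, and so $\sigma(\tau)=\tau$ for every $\sigma\in G$.

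It then remains to descend. Choose any $M\in\mathrm{GL}_2(L)$ representing $\tau$. The equality $\sigma(\tau)=\tau$ in $\mathrm{PGL}_2(L)$ says $\sigma(M)=c_\sigma M$ for some scalar $c_\sigma\in L^*$, and a direct computation gives the cocycle relation $c_{\sigma\sigma'}=c_\sigma\,\sigma(c_{\sigma'})$. Since $L/k$ is a finite Galois extension, Hilbert's Theorem~90 yields $d\in L^*$ with $c_\sigma=\sigma(d)/d$ for all $\sigma$, whence $M':=d^{-1}M$ is $G$-invariant, that is $M'\in\mathrm{GL}_2(k)$, and $M'$ still represents $\tau$. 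Thus $\tau$ has coefficients in $k$, as claimed.

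The argument is essentially bookkeeping, and the only delicate point is matching indices: one must be sure that the permutation of $\{1,2,3\}$ induced by each $\sigma$ on the unprimed triple is reproduced \emph{faithfully} on the primed triple, which is exactly the content of hypothesis (2); that is the step I would expect to be the main (only) obstacle. Note the proof uses nothing about $G$ beyond its being the Galois group of a finite Galois extension — the multiple–quadratic hypothesis merely fixes the context in which the lemma will later be applied.
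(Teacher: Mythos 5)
Your proof is correct and follows essentially the same route as the paper: establish $\sigma(\tau)=\tau$ in $\mathrm{PGL}_2(L)$ from the hypotheses and the uniqueness of the M\"obius transformation sending one ordered triple to another, deduce $\sigma(A)=\lambda_\sigma A$ with $\sigma\mapsto\lambda_\sigma$ a cocycle, and conclude by Hilbert's Theorem 90. You merely spell out the index-matching and the descent step that the paper leaves implicit.
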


\begin{proof}
It follows from the hypotheses, and the uniqueness of the M\"obius transformation taking one
ordered triple onto another, that for any matrix $A\in\mathrm{GL}_2(L)$ defining $\tau$, and for any 
element $\sigma\in G$, we have $\sigma(A)=\lambda_\sigma A$, where the map
$\sigma\mapsto\lambda_\sigma$ is a cocycle with values in $L^*$, so the result follows from 
Hilbert's Theorem 90.
\end{proof}

For instance, if $a,b,c,d\in k$, the Moebius transformation sending the triple $(c+d\sqrt{\alpha},c-d\sqrt{\alpha},a)$
onto $(c+d\sqrt{\alpha},c-d\sqrt{\alpha},b)$ is defined over $k$.

\section{Proof of Theorem \ref{t21}}

If $\alpha=\beta=1$, then  the result is a direct aplication of Lemma \ref{fd1}. If one of them, say $\alpha$
equals $\Delta$, the stem $\mathfrak{m}_k(i)$ of the branch $\mathfrak{s}_k(i)$ contains exactly one point,
 namely the highest point $v_0$ of the ghost path $\mathfrak{m}_L(i)$. 
It is clear from Figure 4A, since $v_0$ is the only point in the ghost path
that is defined over $k$, that the distance from any vertex $u\in V_{\mathfrak{t}(k)}$
to $\mathfrak{m}_L(i)$ equals the distance from $u$ to $v_0$, and therefore
the same formulas hold  in this case, after a substitution as in (\ref{eq2}). Note that the fake distance $d_f$
is nonnegative here.
 If $\alpha=\beta=\Delta$, the same argument holds (Figure 4B), unless both ghost paths do
intersect nontrivially, which means that the fake distance is negative. In that case the only
vertex defined over $k$ on that intersection is, necessarily, the highest point on each ghost path
(Figure 4C). The intersection consists, therefore, of a single vertex, as required in this case, since
$l(i)=l(j)=0$.

\begin{figure}
\[
\unitlength 1mm 
\linethickness{0.4pt}
\ifx\plotpoint\undefined\newsavebox{\plotpoint}\fi 
\begin{picture}(118,31.5)(0,0)
\put(25,6){\line(0,1){13}}
\put(25,20.5){\line(0,1){6}}
\put(25,19.5){\makebox(0,0)[cc]{$\bullet$}}
\put(25,30){\makebox(0,0)[cc]{$\infty$}}
\put(14,13.25){\makebox(0,0)[cc]{$\bullet$}}
\put(11,15){\makebox(0,0)[cc]{${}^{v_0}$}}
\multiput(13.18,13.68)(-.0333333,-.0516667){15}{\line(0,-1){.0516667}}
\multiput(12.18,12.13)(-.0333333,-.0516667){15}{\line(0,-1){.0516667}}
\multiput(11.18,10.58)(-.0333333,-.0516667){15}{\line(0,-1){.0516667}}
\multiput(10.18,9.03)(-.0333333,-.0516667){15}{\line(0,-1){.0516667}}
\multiput(9.18,7.48)(-.0333333,-.0516667){15}{\line(0,-1){.0516667}}
\multiput(13.93,13.43)(.0316667,-.0483333){15}{\line(0,-1){.0483333}}
\multiput(14.88,11.98)(.0316667,-.0483333){15}{\line(0,-1){.0483333}}
\multiput(15.83,10.53)(.0316667,-.0483333){15}{\line(0,-1){.0483333}}
\multiput(16.78,9.08)(.0316667,-.0483333){15}{\line(0,-1){.0483333}}
\multiput(17.73,7.63)(.0316667,-.0483333){15}{\line(0,-1){.0483333}}
\multiput(13.93,14.43)(.82692,.42308){14}{{\rule{.4pt}{.4pt}}}
\put(14.25,24){\makebox(0,0)[cc]{A}}
\put(48,14.5){\makebox(0,0)[cc]{$\bullet$}}
\multiput(47.18,14.93)(-.0333333,-.0516667){15}{\line(0,-1){.0516667}}
\multiput(46.18,13.38)(-.0333333,-.0516667){15}{\line(0,-1){.0516667}}
\multiput(45.18,11.83)(-.0333333,-.0516667){15}{\line(0,-1){.0516667}}
\multiput(44.18,10.28)(-.0333333,-.0516667){15}{\line(0,-1){.0516667}}
\multiput(43.18,8.73)(-.0333333,-.0516667){15}{\line(0,-1){.0516667}}
\multiput(47.93,14.68)(.0316667,-.0483333){15}{\line(0,-1){.0483333}}
\multiput(48.88,13.23)(.0316667,-.0483333){15}{\line(0,-1){.0483333}}
\multiput(49.83,11.78)(.0316667,-.0483333){15}{\line(0,-1){.0483333}}
\multiput(50.78,10.33)(.0316667,-.0483333){15}{\line(0,-1){.0483333}}
\multiput(51.73,8.88)(.0316667,-.0483333){15}{\line(0,-1){.0483333}}
\put(69.75,15.25){\makebox(0,0)[cc]{$\bullet$}}
\multiput(68.93,15.68)(-.0333333,-.0516667){15}{\line(0,-1){.0516667}}
\multiput(67.93,14.13)(-.0333333,-.0516667){15}{\line(0,-1){.0516667}}
\multiput(66.93,12.58)(-.0333333,-.0516667){15}{\line(0,-1){.0516667}}
\multiput(65.93,11.03)(-.0333333,-.0516667){15}{\line(0,-1){.0516667}}
\multiput(64.93,9.48)(-.0333333,-.0516667){15}{\line(0,-1){.0516667}}
\multiput(69.68,15.43)(.0316667,-.0483333){15}{\line(0,-1){.0483333}}
\multiput(70.63,13.98)(.0316667,-.0483333){15}{\line(0,-1){.0483333}}
\multiput(71.58,12.53)(.0316667,-.0483333){15}{\line(0,-1){.0483333}}
\multiput(72.53,11.08)(.0316667,-.0483333){15}{\line(0,-1){.0483333}}
\multiput(73.48,9.63)(.0316667,-.0483333){15}{\line(0,-1){.0483333}}
\multiput(58.18,28.18)(-.55556,-.68056){19}{{\rule{.4pt}{.4pt}}}
\multiput(58.93,27.93)(.58333,-.66667){19}{{\rule{.4pt}{.4pt}}}
\put(58.5,28.75){\makebox(0,0)[cc]{$\bullet$}}
\put(49.5,28){\makebox(0,0)[cc]{B}}
\put(91.5,15.25){\makebox(0,0)[cc]{$\bullet$}}
\multiput(90.68,15.68)(-.0333333,-.0516667){15}{\line(0,-1){.0516667}}
\multiput(89.68,14.13)(-.0333333,-.0516667){15}{\line(0,-1){.0516667}}
\multiput(88.68,12.58)(-.0333333,-.0516667){15}{\line(0,-1){.0516667}}
\multiput(87.68,11.03)(-.0333333,-.0516667){15}{\line(0,-1){.0516667}}
\multiput(86.68,9.48)(-.0333333,-.0516667){15}{\line(0,-1){.0516667}}
\multiput(91.43,15.43)(.0316667,-.0483333){15}{\line(0,-1){.0483333}}
\multiput(92.38,13.98)(.0316667,-.0483333){15}{\line(0,-1){.0483333}}
\multiput(93.33,12.53)(.0316667,-.0483333){15}{\line(0,-1){.0483333}}
\multiput(94.28,11.08)(.0316667,-.0483333){15}{\line(0,-1){.0483333}}
\multiput(95.23,9.63)(.0316667,-.0483333){15}{\line(0,-1){.0483333}}
\put(113.25,16){\makebox(0,0)[cc]{$\bullet$}}
\multiput(112.43,16.43)(-.0333333,-.0516667){15}{\line(0,-1){.0516667}}
\multiput(111.43,14.88)(-.0333333,-.0516667){15}{\line(0,-1){.0516667}}
\multiput(110.43,13.33)(-.0333333,-.0516667){15}{\line(0,-1){.0516667}}
\multiput(109.43,11.78)(-.0333333,-.0516667){15}{\line(0,-1){.0516667}}
\multiput(108.43,10.23)(-.0333333,-.0516667){15}{\line(0,-1){.0516667}}
\multiput(113.18,16.18)(.0316667,-.0483333){15}{\line(0,-1){.0483333}}
\multiput(114.13,14.73)(.0316667,-.0483333){15}{\line(0,-1){.0483333}}
\multiput(115.08,13.28)(.0316667,-.0483333){15}{\line(0,-1){.0483333}}
\multiput(116.03,11.83)(.0316667,-.0483333){15}{\line(0,-1){.0483333}}
\multiput(116.98,10.38)(.0316667,-.0483333){15}{\line(0,-1){.0483333}}
\put(102,29.5){\makebox(0,0)[cc]{$\bullet$}}
\put(93,28.75){\makebox(0,0)[cc]{C}}
\multiput(91.18,15.93)(.03333333,.0452381){45}{\line(0,1){.0452381}}
\multiput(94.18,20.001)(.03333333,.0452381){45}{\line(0,1){.0452381}}
\multiput(97.18,24.073)(.03333333,.0452381){45}{\line(0,1){.0452381}}
\multiput(100.18,28.144)(.03333333,.0452381){45}{\line(0,1){.0452381}}
\multiput(101.68,30.18)(.03338509,-.04037267){46}{\line(0,-1){.04037267}}
\multiput(104.751,26.465)(.03338509,-.04037267){46}{\line(0,-1){.04037267}}
\multiput(107.823,22.751)(.03338509,-.04037267){46}{\line(0,-1){.04037267}}
\multiput(110.894,19.037)(.03338509,-.04037267){46}{\line(0,-1){.04037267}}
\put(9,4){\makebox(0,0)[cc]{${}^z$}}
\put(18,4){\makebox(0,0)[cc]{${}^{\bar{z}}$}}
\put(43,5){\makebox(0,0)[cc]{${}^z$}}
\put(52,5){\makebox(0,0)[cc]{${}^{\bar{z}}$}}
\put(65,5){\makebox(0,0)[cc]{${}^u$}}
\put(74,5){\makebox(0,0)[cc]{${}^{\bar{u}}$}}
\put(87,6){\makebox(0,0)[cc]{${}^z$}}
\put(96,6){\makebox(0,0)[cc]{${}^u$}}
\put(108,6){\makebox(0,0)[cc]{${}^{\bar{z}}$}}
\put(118,6){\makebox(0,0)[cc]{${}^{\bar{u}}$}}
\end{picture}
\]
\caption{Relative position for $\alpha=\Delta$ and $\beta=1$ (A) and $\beta=\Delta$ (B,C).
The long dashes in (C) denote the ghost intersection 
$\mathfrak{m}_L(i)\cap\mathfrak{m}_L(j)$ of the two ghost stems.}
\end{figure}
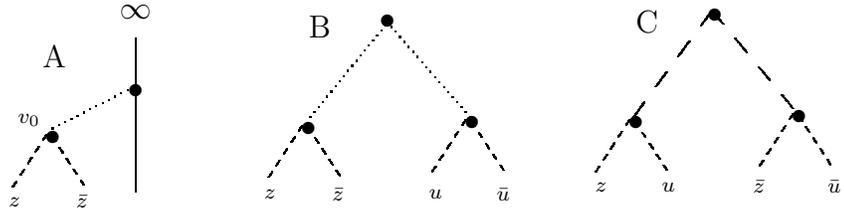

Assume  now that $\alpha=1$, while $\beta$ is either a ramified unit of a uniformizing parameter.
Then Lemma \ref{fd1}, and a substitution as in (\ref{eq2}),
prove that the normalized distance, between the stems $\mathfrak{m}_L(i)$ and $\mathfrak{m}_L(j)$  is
$-\frac{1}{2}\nu \left( \frac{\lambda^2-\beta}{4\beta} \right)$, which is always possitive by the properties of
the quadratic deffect. However,
the stem $\mathfrak{m}_k(j)$  lies at a distance $\nu(2\sqrt\beta)-\nu(\eta-\sqrt\beta)=
-\frac12\nu\left(\frac{\eta^2-\beta}{4\beta}\right)$ from the stem
$\mathfrak{m}_L(j)$, assuming that the base $\xi$ of the $k$-vine in Figure 3C satisfies $\xi=a+b\eta$,
where $\qdef(\beta)=(\eta^2-\beta)$.
 It follows that, if the stem $\mathfrak{m}_L(i)$, which is identified as
a path with the stem $\mathfrak{m}_k(i)$, fails to contain the stem $\mathfrak{m}_k(j)$, then
the distance between these stems is \small
$$-\frac{1}{2}\nu\left( \frac{\lambda^2-\beta}{4\beta} \right)+
\frac12\nu\left(\frac{\eta^2-\beta}{4\beta}\right)-\frac12=-
\frac12\nu\left(\frac{\lambda^2-\beta}{\eta^2-\beta}\right)-\frac12=t-\frac12\nu(\lambda^2-\beta)=d_f,$$
\normalsize
where the summand $1/2$ is the normalized distance between $v_0$ and an endpoint of
the stem $\mathfrak{m}_k(j)$, as in Figure 5B.
Note that  $\mathfrak{m}_k(i)$ contains $\mathfrak{m}_k(j)$ precisely when the fake distance is $-1/2$,
as this is the case if and only if the $k$-vine can be chosen in a way that $\eta=\lambda$ as in Figure 5A.
Furthermore, $d_f=-1/2$ is indeed the minimum possible value since the valuation 
\scriptsize $\nu\!\left(\frac{\lambda^2-\beta}{\eta^2-\beta}\right)$ \normalsize 
above cannot be positive by definition of $\eta$.
The case where $\alpha=\Delta$,
and $\beta$ is either a ramified unit of a uniformizing parameter, is handled similarly, see Figure 6.
Cases B and C can easily be reduced to case A, by applying a Moebius transformation
taking $\infty$ to $\eta$, while preserving the ends of $\mathfrak{m}_L(i)$ (c.f. Lemma \ref{rep3}).
In this case $d_f=0$ is possible, when the vertex in $\mathfrak{m}_k(j)$, 
the highest point of $\mathfrak{m}_L(j)$, coincide with the lower
endpoint of $\mathfrak{m}_k(i)$ in Figure 6A. However, $d_f=-\frac12$ is not possible,
as  the vertex in $\mathfrak{m}_k(j)$ is defined over $k$, and cannot, therefore,
be the midpoint of $\mathfrak{m}_k(i)$. Note that the stems $\mathfrak{m}_L(i)$ and 
$\mathfrak{m}_L(j)$ are defined over  different quadratic extensions of $k$ in this case.

\begin{figure}
\[
\unitlength 1mm 
\linethickness{0.4pt}
\ifx\plotpoint\undefined\newsavebox{\plotpoint}\fi 
\begin{picture}(64.25,25.5)(0,0)
\multiput(17.25,25)(.0336787565,-.0466321244){386}{\line(0,-1){.0466321244}}
\put(21,20){\makebox(0,0)[cc]{$\bullet$}}
\put(23.75,16.25){\makebox(0,0)[cc]{$\bullet$}}
\multiput(21.68,18.68)(-.61111,-.61111){10}{{\rule{.4pt}{.4pt}}}
\multiput(15.68,12.68)(-.0330882,-.0367647){17}{\line(0,-1){.0367647}}
\multiput(14.555,11.43)(-.0330882,-.0367647){17}{\line(0,-1){.0367647}}
\multiput(13.43,10.18)(-.0330882,-.0367647){17}{\line(0,-1){.0367647}}
\multiput(12.305,8.93)(-.0330882,-.0367647){17}{\line(0,-1){.0367647}}
\multiput(16.43,12.93)(.0333333,-.047619){15}{\line(0,-1){.047619}}
\multiput(17.43,11.501)(.0333333,-.047619){15}{\line(0,-1){.047619}}
\multiput(18.43,10.073)(.0333333,-.047619){15}{\line(0,-1){.047619}}
\multiput(19.43,8.644)(.0333333,-.047619){15}{\line(0,-1){.047619}}
\multiput(51.75,25.5)(.0336787565,-.0466321244){386}{\line(0,-1){.0466321244}}
\multiput(56.18,18.68)(-.60714,-.46429){8}{{\rule{.4pt}{.4pt}}}
\multiput(46.18,12.18)(-.60714,-.46429){8}{{\rule{.4pt}{.4pt}}}
\put(51.75,15.25){\makebox(0,0)[cc]{$\bullet$}}
\put(49.9,14.25){\makebox(0,0)[cc]{$+$}}
\put(47.9,15.9){\makebox(0,0)[cc]{${}_{v_0}$}}
\put(47.5,12.5){\makebox(0,0)[cc]{$\bullet$}}
\multiput(51.75,15.55)(-.04878049,-.03353659){82}{\line(-1,0){.04878049}}
\multiput(49.55,14.43)(.5,-.6){6}{{\rule{.4pt}{.4pt}}}
\multiput(52.68,11.18)(.0324074,-.0347222){18}{\line(0,-1){.0347222}}
\multiput(53.846,9.93)(.0324074,-.0347222){18}{\line(0,-1){.0347222}}
\multiput(55.013,8.68)(.0324074,-.0347222){18}{\line(0,-1){.0347222}}
\multiput(52.68,11.43)(-.0318627,-.0343137){17}{\line(0,-1){.0343137}}
\multiput(51.596,10.263)(-.0318627,-.0343137){17}{\line(0,-1){.0343137}}
\multiput(50.513,9.096)(-.0318627,-.0343137){17}{\line(0,-1){.0343137}}
\put(14.5,21){\makebox(0,0)[cc]{A}}
\put(46.25,23.75){\makebox(0,0)[cc]{B}}
\put(30,4){\makebox(0,0)[cc]{${}^{\eta=\lambda}$}}
\put(41,5){\makebox(0,0)[cc]{${}^{\lambda}$}}
\put(66,5){\makebox(0,0)[cc]{${}^{\eta}$}}
\end{picture}
\]
\caption{Relative position for $\alpha=1$ and $\beta$ a ramified unit or a uniformizing parameter.
The stem $\mathfrak{m}_k(j)$ can be contained in the stem $\mathfrak{m}_k(i)$  (A) or not (B).}
\end{figure}
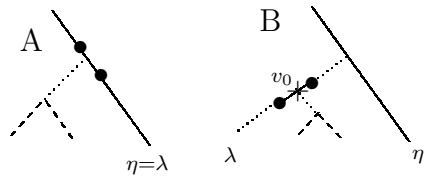

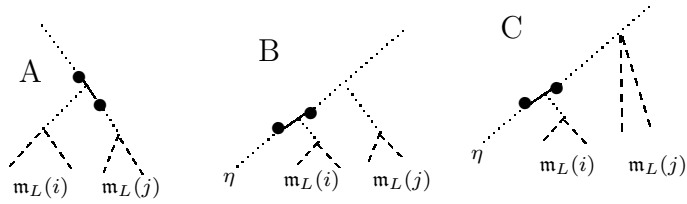
\begin{figure}
\[
\unitlength 1mm 
\linethickness{0.4pt}
\ifx\plotpoint\undefined\newsavebox{\plotpoint}\fi 
\begin{picture}(97.25,29.5)(0,0)
\put(21,20){\makebox(0,0)[cc]{$\bullet$}}
\put(23.75,16.25){\makebox(0,0)[cc]{$\bullet$}}
\multiput(21.68,18.68)(-.61111,-.61111){10}{{\rule{.4pt}{.4pt}}}
\multiput(15.68,12.68)(-.0330882,-.0367647){17}{\line(0,-1){.0367647}}
\multiput(14.555,11.43)(-.0330882,-.0367647){17}{\line(0,-1){.0367647}}
\multiput(13.43,10.18)(-.0330882,-.0367647){17}{\line(0,-1){.0367647}}
\multiput(12.305,8.93)(-.0330882,-.0367647){17}{\line(0,-1){.0367647}}
\multiput(16.43,12.93)(.0333333,-.047619){15}{\line(0,-1){.047619}}
\multiput(17.43,11.501)(.0333333,-.047619){15}{\line(0,-1){.047619}}
\multiput(18.43,10.073)(.0333333,-.047619){15}{\line(0,-1){.047619}}
\multiput(19.43,8.644)(.0333333,-.047619){15}{\line(0,-1){.047619}}
\put(51.75,15.25){\makebox(0,0)[cc]{$\bullet$}}
\put(47.5,13.25){\makebox(0,0)[cc]{$\bullet$}}
\multiput(51.75,15.75)(-.04878049,-.03353659){82}{\line(-1,0){.04878049}}
\multiput(50.18,14.43)(.5,-.6){6}{{\rule{.4pt}{.4pt}}}
\multiput(52.68,11.18)(.0324074,-.0347222){18}{\line(0,-1){.0347222}}
\multiput(53.846,9.93)(.0324074,-.0347222){18}{\line(0,-1){.0347222}}
\multiput(55.013,8.68)(.0324074,-.0347222){18}{\line(0,-1){.0347222}}
\multiput(52.68,11.43)(-.0318627,-.0343137){17}{\line(0,-1){.0343137}}
\multiput(51.596,10.263)(-.0318627,-.0343137){17}{\line(0,-1){.0343137}}
\multiput(50.513,9.096)(-.0318627,-.0343137){17}{\line(0,-1){.0343137}}
\put(14.5,21){\makebox(0,0)[cc]{A}}
\put(46.25,23.75){\makebox(0,0)[cc]{B}}
\multiput(21.5,19.75)(.03333333,-.05){50}{\line(0,-1){.05}}
\multiput(20.18,21.18)(-.53125,.71875){9}{{\rule{.4pt}{.4pt}}}
\multiput(23.68,16.43)(.45,-.7){6}{{\rule{.4pt}{.4pt}}}
\multiput(26.18,12.43)(-.031746,-.079365){9}{\line(0,-1){.079365}}
\multiput(25.608,11.001)(-.031746,-.079365){9}{\line(0,-1){.079365}}
\multiput(25.037,9.573)(-.031746,-.079365){9}{\line(0,-1){.079365}}
\multiput(24.465,8.144)(-.031746,-.079365){9}{\line(0,-1){.079365}}
\multiput(26.43,11.93)(.032967,-.0521978){13}{\line(0,-1){.0521978}}
\multiput(27.287,10.573)(.032967,-.0521978){13}{\line(0,-1){.0521978}}
\multiput(28.144,9.215)(.032967,-.0521978){13}{\line(0,-1){.0521978}}
\multiput(29.001,7.858)(.032967,-.0521978){13}{\line(0,-1){.0521978}}
\multiput(56.68,18.43)(.53125,-.71875){9}{{\rule{.4pt}{.4pt}}}
\multiput(60.93,12.68)(-.033333,-.083333){9}{\line(0,-1){.083333}}
\multiput(60.33,11.18)(-.033333,-.083333){9}{\line(0,-1){.083333}}
\multiput(59.73,9.68)(-.033333,-.083333){9}{\line(0,-1){.083333}}
\multiput(60.93,12.43)(.0324074,-.0324074){18}{\line(1,0){.0324074}}
\multiput(62.096,11.263)(.0324074,-.0324074){18}{\line(1,0){.0324074}}
\multiput(63.263,10.096)(.0324074,-.0324074){18}{\line(0,-1){.0324074}}
\multiput(52.18,15.93)(.69118,.60294){18}{{\rule{.4pt}{.4pt}}}
\multiput(47.43,12.93)(-.6875,-.59375){9}{{\rule{.4pt}{.4pt}}}
\put(84.5,18.5){\makebox(0,0)[cc]{$\bullet$}}
\put(80.25,16.5){\makebox(0,0)[cc]{$\bullet$}}
\multiput(84.5,19)(-.04878049,-.03353659){82}{\line(-1,0){.04878049}}
\multiput(82.93,17.68)(.5,-.6){6}{{\rule{.4pt}{.4pt}}}
\multiput(85.43,14.43)(.0324074,-.0347222){18}{\line(0,-1){.0347222}}
\multiput(86.596,13.18)(.0324074,-.0347222){18}{\line(0,-1){.0347222}}
\multiput(87.763,11.93)(.0324074,-.0347222){18}{\line(0,-1){.0347222}}
\multiput(85.43,14.68)(-.0318627,-.0343137){17}{\line(0,-1){.0343137}}
\multiput(84.346,13.513)(-.0318627,-.0343137){17}{\line(0,-1){.0343137}}
\multiput(83.263,12.346)(-.0318627,-.0343137){17}{\line(0,-1){.0343137}}
\multiput(84.93,19.18)(.69118,.60294){18}{{\rule{.4pt}{.4pt}}}
\multiput(80.18,16.18)(-.6875,-.59375){9}{{\rule{.4pt}{.4pt}}}
\put(78.5,26.75){\makebox(0,0)[cc]{C}}
\put(92.93,25.68){\line(0,-1){.9808}}
\put(92.968,23.718){\line(0,-1){.9808}}
\put(93.007,21.757){\line(0,-1){.9808}}
\put(93.045,19.795){\line(0,-1){.9808}}
\put(93.084,17.834){\line(0,-1){.9808}}
\put(93.122,15.872){\line(0,-1){.9808}}
\put(93.16,13.91){\line(0,-1){.9808}}
\multiput(93.18,25.43)(.031746,-.10119){9}{\line(0,-1){.10119}}
\multiput(93.751,23.608)(.031746,-.10119){9}{\line(0,-1){.10119}}
\multiput(94.323,21.787)(.031746,-.10119){9}{\line(0,-1){.10119}}
\multiput(94.894,19.965)(.031746,-.10119){9}{\line(0,-1){.10119}}
\multiput(95.465,18.144)(.031746,-.10119){9}{\line(0,-1){.10119}}
\multiput(96.037,16.323)(.031746,-.10119){9}{\line(0,-1){.10119}}
\multiput(96.608,14.501)(.031746,-.10119){9}{\line(0,-1){.10119}}
\put(16,5){\makebox(0,0)[cc]{${}^{\mathfrak{m}_L(i)}$}}
\put(28,5){\makebox(0,0)[cc]{${}^{\mathfrak{m}_L(j)}$}}
\put(41,6){\makebox(0,0)[cc]{${}^{\eta}$}}
\put(52,6){\makebox(0,0)[cc]{${}^{\mathfrak{m}_L(i)}$}}
\put(64,6){\makebox(0,0)[cc]{${}^{\mathfrak{m}_L(j)}$}}
\put(74,9){\makebox(0,0)[cc]{${}^{\eta}$}}
\put(86,8){\makebox(0,0)[cc]{${}^{\mathfrak{m}_L(i)}$}}
\put(98,8){\makebox(0,0)[cc]{${}^{\mathfrak{m}_L(j)}$}}
\end{picture}
\]
\caption{Relative position for $\alpha=\Delta$ and $\beta$ a ramified unit or a uniformizing parameter.}
\end{figure}

In all remaining cases, either stem, $\mathfrak{m}_k(i)$ or $\mathfrak{m}_k(j)$, is an edge located in 
the $k$-vine minimizing the distance to the corresponding stem, either
$\mathfrak{m}_L(i)$ or $\mathfrak{m}_L(j)$, as in Figure 3C.
If these two $k$-vines are the maximal paths joining $\infty$ with $\eta_1$ and
$\eta_2$, respectively, then the stems of the $k$-branches are located as shown
in one of the Figures 7A-C, unless they coincide, and in the latter case,
this common stem is located as in Figure 7D. The ghost stems 
$\mathfrak{m}_L(i)$ and $\mathfrak{m}_L(j)$ are located in the direction of either
arrow in these Figures. The arrows could coincide in Figure 7D, but this does not affect the proof.
 In the first $3$ cases, the unique path
in the tree $\mathfrak{t}(L)$ from $\mathfrak{m}_L(i)$ to $\mathfrak{m}_L(j)$ passes
through one of  the endpoints of each, $\mathfrak{m}_k(i)$ and $\mathfrak{m}_k(j)$, 
so that the distance between them, by a similar argument as before, is
\small
$$-\frac{1}{2}\nu\left( \frac{\lambda^2-\alpha\beta}{4\alpha\beta} \right)+
\frac12\nu\left(\frac{\eta_1^2-\alpha}{4\alpha}\right)+
\frac12\nu\left(\frac{\eta_2^2-\beta}{4\beta}\right)-1=$$
$$-
\frac12\nu\left(\frac{4(\lambda^2-\alpha\beta)}{(\eta_1^2-\alpha)(\eta_2^2-\beta)}\right)-1=t+s-\frac12\nu\Big(4(\lambda^2-\alpha\beta)\Big)=d_f,$$
\normalsize
where, as before,  there is a final $1$ to take care of the distance $1/2$ from
one endpoint of each stem to its center. 
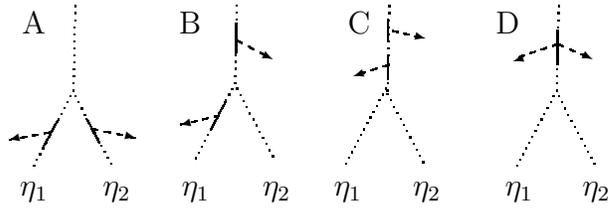
\begin{figure}
\[
\unitlength 1mm 
\linethickness{0.4pt}
\ifx\plotpoint\undefined\newsavebox{\plotpoint}\fi 
\begin{picture}(87.75,28.25)(0,0)
\multiput(18.43,27.18)(-.02083,-.9375){13}{{\rule{.4pt}{.4pt}}}
\multiput(18.18,15.93)(-.40385,-.76923){14}{{\rule{.4pt}{.4pt}}}
\multiput(18.43,15.18)(.43182,-.81818){12}{{\rule{.4pt}{.4pt}}}
\multiput(39.93,28.18)(-.02083,-.9375){13}{{\rule{.4pt}{.4pt}}}
\multiput(39.68,16.93)(-.40385,-.76923){14}{{\rule{.4pt}{.4pt}}}
\multiput(39.93,16.18)(.43182,-.81818){12}{{\rule{.4pt}{.4pt}}}
\multiput(59.68,16.18)(-.40385,-.76923){14}{{\rule{.4pt}{.4pt}}}
\multiput(59.93,15.43)(.43182,-.81818){12}{{\rule{.4pt}{.4pt}}}
\multiput(82.68,27.18)(-.02083,-.9375){13}{{\rule{.4pt}{.4pt}}}
\multiput(82.43,15.93)(-.40385,-.76923){14}{{\rule{.4pt}{.4pt}}}
\multiput(82.68,15.18)(.43182,-.81818){12}{{\rule{.4pt}{.4pt}}}
\multiput(16.1,12)(-.03,-.056){67}{\line(0,-1){.04850746}}
\multiput(20,12.25)(.03365385,-.0625){52}{\line(0,-1){.0625}}
\multiput(36.5,10.75)(.04,.07692308){52}{\line(0,1){.07692308}}
\put(39.9,21){\line(0,1){4}}
\multiput(60.18,27.18)(-.02083,-.9375){13}{{\rule{.4pt}{.4pt}}}
\put(60.1,25.5){\line(0,-1){3}}
\put(60.1,20.5){\line(0,-1){3}}
\put(82.6,24){\line(0,-1){4.5}}
\put(13,2.5){\makebox(0,0)[cc]{$\eta_1$}}
\put(24,2.5){\makebox(0,0)[cc]{$\eta_2$}}
\put(34.5,2.5){\makebox(0,0)[cc]{$\eta_1$}}
\put(45.25,2.5){\makebox(0,0)[cc]{$\eta_2$}}
\put(54.75,2.5){\makebox(0,0)[cc]{$\eta_1$}}
\put(65.25,2.5){\makebox(0,0)[cc]{$\eta_2$}}
\put(77.5,2.5){\makebox(0,0)[cc]{$\eta_1$}}
\put(87.75,2.5){\makebox(0,0)[cc]{$\eta_2$}}
\put(13,25){\makebox(0,0)[cc]{A}}
\put(33.75,25){\makebox(0,0)[cc]{B}}
\put(56.25,25){\makebox(0,0)[cc]{C}}
\put(76,25){\makebox(0,0)[cc]{D}}
\put(26.5,9.75){\vector(4,-1){.07}}\multiput(20.68,10.93)(.136905,-.029762){6}{\line(1,0){.136905}}
\multiput(22.323,10.573)(.136905,-.029762){6}{\line(1,0){.136905}}
\multiput(23.965,10.215)(.136905,-.029762){6}{\line(1,0){.136905}}
\multiput(25.608,9.858)(.136905,-.029762){6}{\line(1,0){.136905}}
\put(32.5,11.5){\vector(-4,-1){.07}}\multiput(37.68,12.68)(-.125,-.029762){6}{\line(-1,0){.125}}
\multiput(36.18,12.323)(-.125,-.029762){6}{\line(-1,0){.125}}
\multiput(34.68,11.965)(-.125,-.029762){6}{\line(-1,0){.125}}
\multiput(33.18,11.608)(-.125,-.029762){6}{\line(-1,0){.125}}
\put(44.75,20.25){\vector(2,-1){.07}}\multiput(39.93,22.68)(.0608974,-.0320513){13}{\line(1,0){.0608974}}
\multiput(41.513,21.846)(.0608974,-.0320513){13}{\line(1,0){.0608974}}
\multiput(43.096,21.013)(.0608974,-.0320513){13}{\line(1,0){.0608974}}
\put(55.5,18){\vector(-3,-1){.07}}\multiput(60.18,19.43)(-.098958,-.03125){8}{\line(-1,0){.098958}}
\multiput(58.596,18.93)(-.098958,-.03125){8}{\line(-1,0){.098958}}
\multiput(57.013,18.43)(-.098958,-.03125){8}{\line(-1,0){.098958}}
\put(65,23){\vector(4,-1){.07}}\multiput(60.68,23.93)(.141667,-.033333){5}{\line(1,0){.141667}}
\multiput(62.096,23.596)(.141667,-.033333){5}{\line(1,0){.141667}}
\multiput(63.513,23.263)(.141667,-.033333){5}{\line(1,0){.141667}}
\put(87.23,20.25){\vector(2,-1){.07}}\multiput(82.68,22.18)(.075,-.033333){10}{\line(1,0){.075}}
\multiput(84.18,21.513)(.075,-.033333){10}{\line(1,0){.075}}
\multiput(85.68,20.846)(.075,-.033333){10}{\line(1,0){.075}}
\put(76.75,20){\vector(-3,-1){.07}}\multiput(82.68,22.18)(-.09375,-.03125){8}{\line(-1,0){.09375}}
\multiput(81.18,21.68)(-.09375,-.03125){8}{\line(-1,0){.09375}}
\multiput(79.68,21.18)(-.09375,-.03125){8}{\line(-1,0){.09375}}
\multiput(78.18,20.68)(-.09375,-.03125){8}{\line(-1,0){.09375}}
\put(9.75,9.5){\vector(-4,-1){.07}}\multiput(15.18,10.43)(-.157143,-.028571){5}{\line(-1,0){.157143}}
\multiput(13.608,10.144)(-.157143,-.028571){5}{\line(-1,0){.157143}}
\multiput(12.037,9.858)(-.157143,-.028571){5}{\line(-1,0){.157143}}
\multiput(10.465,9.573)(-.157143,-.028571){5}{\line(-1,0){.157143}}
\end{picture}
\] 
\caption{Possible locations of the stems, when each element in $\{\alpha,\beta\}$ is either a
ramified unit or a uniformizing parameter. In (D) the arrows might coincide.}
\end{figure}

In the last case, i.e., when the $k$-stems coincide, we claim that
$$t+s-\frac12\nu\Big(4(\lambda^2-\alpha\beta)\Big)=d_f<0.$$
Note that the expression on the left equals
\small
$$-\frac{1}{2}\nu\left( \frac{\lambda^2-\alpha\beta}{4\alpha\beta} \right)+
\frac12\nu\left(\frac{\eta_1^2-\alpha}{4\alpha}\right)+
\frac12\nu\left(\frac{\eta_2^2-\beta}{4\beta}\right)-1,$$
\normalsize
where the first term is the distance between the $L$-stems, while the additive inverses of
the second and third terms are the distance from each of them to the common $k$-stem. 
Since, in this case, the path from one $L$-stem to the other
cannot pass through either endpoint of the $k$-stem, the result follows.\qed  

\section{Computing embedding numbers via gost branches}

In this section we prove Theorem \ref{t22}. For this we make extensive use of Lemma 5.3.

Let us begin by recalling the method employed in \cite{omeat} to compute embedding numbers
for an order $\Omega$ spanning an algebra isomorphic to $k\times k$.
Essentially, it depends on the following observations:
\begin{enumerate}
\item A local Eichler order $\Ea$ of level $n$
 is completely determined by the branch $\mathfrak{s}(\Ea)$,
which is a path of length $n$.
\item The local Eichler order $\Ea$ corresponding to a path $\mathfrak{s}$
contains an order $\Omega$ if and only if $\mathfrak{s}$ is contained in
the branch $\mathfrak{s}(\Omega)$.
\item The stabilizer $\Gamma_1$ of $\Ea$ is the stabilizer of the path $\mathfrak{s}$,
while $\Gamma_2=k^*\Ea^*$ is the point-wise stabilizer of the path, or equivalently,
the stabilizer of either of the walks, $v_0v_1\dots v_n$ or $v_nv_{n-1}\dots v_0$, corresponding
to this path. An element of $\Gamma_2$ that is not
in  $\Gamma_1$ interchanges these two paths.
\item The  orders $\Omega'$, isomorphic to $\Omega$, are in correspondence with
the maximal paths in the BT-tree, while the embeddings $\phi:\Omega\rightarrow\alge$ are in 
correspondence with the idempotents $\tau$, or equivalently, with the oriented maximal
paths, which can be seen as doubly infinite walks.
\item The group of moebius transformation acts transitively on triplets of different ends in 
$\mathbb{P}^1(k)$, while orbits of quartets are fully determined by its cross-ratio.
\item If we have four different vertices $A,B,C,D$ with ends $a,b,c,d$ beyond them as shown in Figure 8,
with given lengths $u,y,s,l,x\in\mathbb{Z}_{\geq0}$,
the orbit of the quartet $(A,B,C,D)$ is determined by the cross-ratio $[a,b;c,d]$ up to
a congruence modulo $\pi^{l+\mathrm{min}\{u,y,s,x\}}\oink$. The same holds for any combination
of ends and vertices by making some of the distances $u$, $y$, $s$ or $x$ infinte.
\end{enumerate}
\begin{figure}
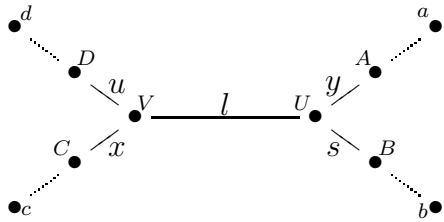

\[  \xygraph{
!{<0cm,0cm>;<.8cm,0cm>:<0cm,.6cm>::} 
!{(16.5,15.3)}*+{l}="l1"
!{(14.7,15.7)}*+{u}="u1"
!{(14.7,14.3)}*+{x}="t1"
!{(18.3,14.3)}*+{s}="s1"
!{(18.3,15.7)}*+{y}="r1"
!{(13,17) }*+{\bullet}="dl"   !{(13.2,17.2) }*+{{}^{d}}="dln"
!{(14,16) }*+{\bullet}="Dl"   !{(14.2,16.2) }*+{{}^{D}}="Dln"
!{(13,13) }*+{\bullet}="cl"   !{(13.2,12.8) }*+{{}^{c}}="cln"
!{(14,14) }*+{\bullet}="Cl"   !{(13.8,14.2) }*+{{}^{C}}="Cln"
!{(15,15) }*+{\bullet}="Vl"   !{(15.2,15.2) }*+{{}^{V}}="Vln"
!{(18,15) }*+{\bullet}="Ul"  !{(17.8,15.2) }*+{{}^{U}}="Uln"
!{(20,17) }*+{\bullet}="al"  !{(19.8,17.2) }*+{{}^{a}}="aln"
!{(19,16) }*+{\bullet}="Al"  !{(18.8,16.2) }*+{{}^{A}}="Aln"
!{(20,13) }*+{\bullet}="bl"  !{(19.8,12.8) }*+{{}^{b}}="bln"
!{(19,14) }*+{\bullet}="Bl"  !{(19.2,14.2) }*+{{}^{B}}="Bln"
"Al"-"Ul" "Ul"-"Bl" "Ul"-"Vl" "Vl"-"Dl" "Vl"-"Cl"
"al"-@{.}"Al" "bl"-@{.}"Bl" "cl"-@{.}"Cl" "dl"-@{.}"Dl"
} 
\]
\caption{ The minimal graph containing four different given vertices, and ends beyond them.
Some lines, like $u$ and $l$ might have length $0$, as long as the vertices $A$, $B$, $C$ and $D$
remain different. Lengths are normalized.} 
\label{figmob}
\end{figure}
To compute the embedding numbers, we first compute the number of walks $v_0\cdots v_r$,
 of length $r$  inside $\mathfrak{b}=\mathfrak{s}_k(\Omega)$,
starting from a given optimal vertex $v_0$. We claim that this number equals $n=q^{[r/2]}$
in our case.
In fact, if for any $i\in\{1,2,\dots,r\}$ we have $p_\mathfrak{b}(v_i)<p_\mathfrak{b}(v_{i-1})$,
the same must hold for every subsequent index and the length $r$ of the walk cannot
exceed $i+p_\mathfrak{b}(v_i)$. Since $v_0$ is optimal, $p_\mathfrak{b}(v_i)\leq i$,
whence $i\geq\frac r2$. On the other hand  $p_\mathfrak{b}(v_i)\leq p(\mathfrak{b})=t$,
whence $i\geq r-t$.
 The smallest value of $i$ satirfying these inequalities is called the returning
point $r_0$. In fact, $r_0=r-[r/2]$ in each case in our setting, since $r\leq 2t+1$.
Since every vertex, in a thick path whose stem has length $0$ or $1$, has at most one neighbor
whose depth is not smaller, the vertices $v_1$ through $v_{r_0}$ are completely determined
by $v_0$. Each subsequent vertex $v_{r_0+1},\dots,v_r$ can be chosen among $q$ different
choices, as every path of length $r-r_0$ from a vertex at depth  
$p_\mathfrak{b}(v_{r_0})=r_0\leq r-r_0$ is completely contained in the branch $\mathfrak{b}$.
The claim follows. 

Similarly, the invariant $m$ denotes, in each case, the number of these walks
for which $v_r$ is also optimal. 
If $r$ is odd, then $m=0$, unless we can have a path of length $r$ with two optimal endpoints,
which necesarily contains a stem edge. This is only possible in the last case in the table.
If $r$ is the diameter of $\mathfrak{s}_k\big(\phi(\Omega)\big)$, then $m=p^{[r/2]}$.
Otherwise, $v_r$ is optimal if and only if $p_\mathfrak{b}(v_{r_0+1})<p_\mathfrak{b}(v_{r_0})$,
and the proportion of paths satisfying this inequality is precisely $\frac{q-1}q$, since there
is exactly one choice of $v_{r_0+1}$ that fails to satisfy it.

Every embedding $\phi:\Omega\rightarrow \matrici_2(k)$ corresponds to a unique
walk $w$ in $\mathfrak{t}(L)$ whose corresponding path is the ghost stem 
$\mathfrak{m}_L\big(\phi(\Omega)\big)$. The ends of this path must be a
$\mathrm{Gal}(L/k)$-orbit,
 for $\phi$ to be defined over $k$.
 Every Eichler order corresponds to a unique finite path in 
$\mathfrak{t}(k)$, or equivalently, two walks. We conclude that
every pair $(\phi,\Ea)$, where $\phi:\Omega\rightarrow \matrici_2(k)$ is an embedding, and 
$\Ea$ is an Eichler order optimally containing $\phi(\Omega)$, corresponds to two pairs
of walks $(w,u_1)$ and $(w,u_2)$ where $u_1$ and $u_2$ define the same path.
Furthermore, the initial vertex of either $u_1$ or $u_2$ is optimal in the corresponding branch
$\mathfrak{s}_k\Big(\phi(\Omega)\Big)$. Recall that $\Gamma_1$ is the stabilizer of the walk $u_1$,
while $\Gamma_2$ is the stabilizer of the pair $\{u_1,u_2\}$. The group of Moebius 
transformations acts transitively on triplets of elements in $k$, whence the orbit of
a triplet $(v,a,b)$, where $v$ is a vertex, while $a$ and $b$ are ends of the BT-tree,
is completely determined by the distance from $v$ to the path joining $a$ and $b$.
A vertex $v_0$ is optimal in the branch $\mathfrak{s}_k\Big(\phi(\Omega)\Big)$,
if and only if its distance to the maximal path corresponding to $\phi$ equals $t$.
This information is given by the invariant of the pair of walks. In fact, if $u_1$ is the path
from $C$ to $D$ in Figure 8, the distance $l$ from $V$ to $U$ in given by the formula
$l=\nu(t-1)$, where $t=[a,b;c,d]$.
 This is immediate, since we can assume $(a,b,c,d)=(\infty,0,1,t)$.  

To compute $e_1$, we fix a walk $u_1$ corresponding to the Eichler order $\Ea$. 
Assume $u_1$ is the walk from $A$ to $B$ in Figure 8.
Observe that
$\Gamma_1$ is the stabilizer of this path. Two optimal embeddings of $\Omega$ into $\Ea$,
corresponding to the doubly infinite walks $w$ and $w'$ from $c$ to $d$, and from $c'$ to $d'$, respectively,
are conjugate by an element stabilizing the path $u_1$ if and only if the invariant is the same,
i.e., $$[a,b;c,d]\equiv[a,b;c',d']\ \left(\mathrm{mod}\ \pi_L^{e(l+\mathrm{min}\{s,y\})}\right),$$ as in this case 
$u=x=\infty$.
Note that both $\{c,d\}$ and $\{c',d'\}$ are $\mathrm{Gal}(L/k)$-orbits, while $a$ and $b$ can be chosen in $k$.
Assume $s\leq y$. Set $b'=\mu(b)$, while $\mu$ is the Moebius transformation satisfying $\mu(a,c,d)=(a,c',d')$,
which is deffined over $k$ by Lemma \ref{rep3}. Then by \cite[Cor. 5.1]{omeat}, $b'$ is an end beyond $B$ in
Figure $8$. Now $\mu$ leaves $u_1$ invariant,
while sends $w$ to $w'$, and is defined over $k$, again by Lemma \ref{rep3}. 
 We conclude that, to compute the number of orbits, we just need
to compute the number of possible invariants. 
Note however that we need to consider embeddings whose corresponding walk has an initial optimal vertex,
and also embeddings whose corresponding walk has an final optimal vertex. This leads us to the formula
$e_1=2n-m=2p^{[r/2]}-m$.

To compute $e_2$, we replace the stabilizer $\Gamma_1$ of the walk $u_1$ by the stabilizer
$\Gamma_2$ of the corresponding path. Note that $\Gamma_1$ is normal in $\Gamma_2$,
and the corresponding quotient acts on the set of $\Gamma_1$-orbits, so we have
$e_2=\frac12(e_1+\chi_2)$ as soon as we prove that $\chi_2$ is the number of invariant orbits.
The orbit of an embedding $\phi$ is invariant if there exists two elements
$\sigma\in\Gamma_1$ and $\lambda\in\Gamma_2\backslash\Gamma_1$
satisfying $\sigma\phi(\omega)\sigma^{-1}=\lambda\phi(\omega)\lambda^{-1}$
for every $\omega\in\Omega$. This is equivalent to the existence of a Moebius transformation
fliping the ends of the path $\mathfrak{s}_k(\Ea)$, while leaving the ends of the branch
$\mathfrak{m}_L\Big(\phi(\Omega)\Big)$ invariant. This is only possible if the following
conditions are satisfied:
\begin{enumerate}
\item Both endpoints of the path $\mathfrak{s}_k(\Ea)$ are equidistant to the ghost stem
 $\mathfrak{m}_L\Big(\phi(\Omega)\Big)$.
\item The invariant $[a,b;c,d]$ of the quartet is its own inverse in the quotient ring
in equation (\ref{999}) (c.f. \S2).
\end{enumerate}
The computation of $e_3=\frac12(e_1+\chi_3)$ is similar, but we no longer require
condition (1) above, as we just need to flip the ends of the infinite path.

To compute $e_4$ we need to compute the number of orbits of pairs of walks, as before,
under an action of the Klein group $C_2\times C_2$ that reverses either walk. This can be done 
via Burnside's Counting Lemma \cite[\S26.10]{r15}. We already know the number of invariants that remain invariant
when we reverse either walk, they are $\chi_2$ and $\chi_3$ respectively.
If we reverse both walks simultaneously, every walk with two optimal endpoints in invariant
since the cross ratio  has the symmetry $[a,b;c,d]=[b,a;d,c]$.  We conclude that
$$e_4=\frac14\Big((2n-m)+\chi_2+\chi_3+m\Big)=\frac n2+\frac14(\chi_2+\chi_3).$$

 If $r=0$ then $\Gamma_1=\Gamma_2$, which impplies $e_1=e_2$ and $e_3=e_4$.
Furthermore $e_1\geq e_3$ as conjugate embeddings have conjugate images.
It suffices therefore to see that $e_1=1$. By another application of  Lemma \ref{rep3},
  the group of Moebius transformations
acts transitively on triples  $(v,a,b)$, where $a$ and $b$ are ends of the BT-tree $\mathfrak{t}(L)$  in the
same Galois orbit, while $v\in V_{\mathfrak{t}(k)}$ is a vertex at a fixed distance, as above.
The result follows. \qed

\end{document}